\numberwithin{equation}{section}
\theoremstyle{definition}
\newtheorem{definition}{Definition}[section]
\theoremstyle{remark}
\newtheorem{remark}[definition]{Remark}
\theoremstyle{plain}
\newtheorem{theorem}[definition]{Theorem}
\newtheorem{result}[definition]{Result}
\newtheorem{lemma}[definition]{Lemma}
\newtheorem{proposition}[definition]{Proposition}
\newcommand{\zt}{\zeta}
\newcommand{\zbar}{\overline{z}}
\newcommand{\bas}{\boldsymbol{\epsilon}}
\newcommand{\bund}{\mathfrak{B}}
\newcommand{\str}{\mathcal{M}}
\newcommand{\schl}{\mathscr{S}}
\newcommand{\poin}{\boldsymbol{{\sf p}}}
\newcommand{\bdy}{\partial}
\newcommand{\OM}{\Omega}
\newcommand{\D}{\mathbb{D}}
\newcommand{\Ba}{\mathscr{B}}
\newcommand{\B}{\mathbb{B}}
\newcommand{\Dee}{\mathscr{D}}
\newcommand{\De}{\mathcal{D}}
\newcommand{\acorn}{\mathscr{Q}}
\newcommand{\smoo}{\mathcal{C}}
\newcommand\leb[1]{\mathbb{L}^{{#1}}}
\newcommand{\dee}{\boldsymbol{{\sf D}}}
\newcommand{\qu}{\boldsymbol{{\sf Q}}}
\newcommand{\bcdot}{\boldsymbol{\cdot}}
\newcommand\innp[2]{\left({#1}\!\mid\!{#2}\right)}
\newcommand\snp[2]{\langle{#1}, {#2}\rangle}
\newcommand\bval[2]{{#1}_{{#2}}^{\bullet}}
\newcommand\squee[1]{{}^{\raisebox{-1pt}{$\scriptstyle{{#1}}$}}\!s}
\newcommand{\N}{\mathbb{N}}
\newcommand{\Cn}{\mathbb{C}^n}
\newcommand{\C}{\mathbb{C}} 
\newcommand{\R}{\mathbb{R}}
\newcommand{\hypp}{\mathcal{H}}
\newcommand{\wht}{\widehat}
\newcommand{\wt}{\widetilde}
\newcommand{\rank}{{\sf rank}}
\begin{document}

\title[The squeezing function: computations, estimates \& a new application]{The
squeezing function: exact computations, optimal \\ estimates, and a new application }

\author{Gautam Bharali}
\address{Department of Mathematics, Indian Institute of Science, Bangalore 560012, India}
\email{bharali@iisc.ac.in}

\author{Diganta Borah}
\address{Indian Institute of Science Education \& Research Pune, Pune 411008, India}
\email{dborah@iiserpune.ac.in}

\author{Sushil Gorai}
\address{Department of Mathematics \& Statistics, Indian Institute of Science Education \& Research
Kolkata, Mohanpur 741246, India}
\email{sushil.gorai@iiserkol.ac.in}

\begin{abstract}
We present a new application of the squeezing function $s_D$, using which one may detect when a given bounded
pseudoconvex domain $D\varsubsetneq \mathbb{C}^n$, $n\geq 2$, is not biholomorphic to any product domain. One of
the ingredients used in establishing this result is also used to give an exact computation of the squeezing
function (which is a constant) of any bounded symmetric domain. This extends a computation by Kubota to
any Cartesian product of Cartan domains at least one of which is an exceptional domain. Our method circumvents
any case-by-case analysis by rank and also provides optimal estimates for the squeezing functions of certain domains. 
Lastly, we identify a family of bounded domains that are holomorphic homogeneous regular.
\end{abstract}

\keywords{Holomorphic homogeneous regular domains, Cartan domains, squeezing function}
\subjclass[2020]{Primary: 32F27, 32F45, 53C35; Secondary: 32H35}

\maketitle

\vspace{-9.5mm}
\section{Introduction and statement of results}\label{S:intro}
The \emph{squeezing function} of a bounded domain $D\subset \C^n$, 
denoted by $s_D$, is defined as
\[
  s_D(z)\,:=\,\sup\{s_D(z;F) \mid F: D\to B^n(0,1) \; \text{is an injective holomorphic map with $F(z)=0$}\}
\]
where, for each $F: D\to B^n(0,1)$ as above, $s_D(z;F)$ is given by
\[
  s_D(z;F)\,:=\,\sup\{r>0 : B^n(0,r)\subset F(D)\}.
\]
The squeezing function was introduced by Deng--Guan--Zhang \cite{dengGuanZhang:spsfbd12}, motivated by a closely
related notion presented in the works of Liu--Sun--Yau \cite{liuSunYau:cmmsRsI04} and Yeung \cite{yeung:gdusf09}. In recent
years, a lot of the research on the squeezing function has focused on a diagnostic that $s_D$ provides for detecting (local)
strong Levi pseudoconvexity of $\bdy{D}$. In this work, in contrast, we return to a class of problems that the
squeezing function was initially associated with: namely, computation of the squeezing function $s_D$ and its
connections with the intrinsic complex geometry of $D$.  

\subsection{The geometric viewpoint}\label{SS:geom_v}
Our first result is a new application of the squeezing function. It highlights the idea that $s_D$, under certain
conditions, provides a wealth of information about the intrinsic complex geometry of $D$. Our proof
relies on an important insight, due to Globevnik \cite{globevnik:dSm00}, into the geometry of pseudoconvex domains
in $\Cn$ (in fact, of Stein manifolds). To state our result, we
need a clarification: a domain in $\Cn$, $n\geq 2$, is said to be \emph{irreducible} if it is not biholomorphic
to a Cartesian product of domains of lower dimension. We can now present:

\begin{theorem}\label{T:product}
Let $D$ be a bounded domain in $\Cn$, $n\geq 2$, and assume $D$ is pseudoconvex.
\begin{enumerate}[leftmargin=27pt, label=$(\alph*)$]
  \item\label{I:contr} Suppose $D$ is contractible. If there exists a point $z\in D$ and $m\in \N$, $2\leq m \leq n$,
  such that $s_D(z) > 1/\sqrt{m}$, then $D$ is not biholomorphic to a product of $m$ or more irreducible factors.
  \item\label{I:hi_dim} Suppose $n\geq 4$. If there exists a point $z\in D$ and $m\in \N$, $2\leq m \leq n/2$,
  such that $s_D(z) > 1/\sqrt{m}$, then $D$ is not biholomorphic to a product of $m$ factors
  of dimension\,$\geq 2$.
\end{enumerate}
\end{theorem}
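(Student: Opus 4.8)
The plan is to argue by contraposition, using that $s_D$ is a biholomorphic invariant. For part \ref{I:contr}, suppose to the contrary that $D$ is biholomorphic to a product $D_1\times\cdots\times D_k$ of $k\geq m$ irreducible factors; for part \ref{I:hi_dim}, suppose $D\cong D_1\times\cdots\times D_m$ with each $\dim D_j\geq 2$, so $k=m$. In either case each $D_j$ realizes a slice of a bounded domain, hence is biholomorphic to a bounded domain and is therefore Kobayashi hyperbolic. By invariance it suffices to show that at every point $w=(w_1,\dots,w_k)$ of such a product one has $s_{D_1\times\cdots\times D_k}(w)\leq 1/\sqrt{k}\leq 1/\sqrt{m}$, since this contradicts the hypothesis $s_D(z)>1/\sqrt{m}$.

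The core is an infinitesimal estimate via the Kobayashi--Royden metric $\kappa$. Fix an injective holomorphic $F\colon D_1\times\cdots\times D_k\to B^n(0,1)$ with $F(w)=0$ and $B^n(0,r)\subset F(D_1\times\cdots\times D_k)$; write $A=dF_w\in GL_n(\C)$. For each $j$ choose a unit-speed direction $u_j$ in the $j$-th tangent factor, i.e.\ $\kappa_{D_j}(w_j;u_j)=1$ (possible since $D_j$ is hyperbolic), let $\widehat{u}_j$ denote its inclusion into $\C^n$, and set $a_j:=A\widehat{u}_j$. The product formula $\kappa_{D_1\times\cdots\times D_k}(w;v)=\max_j\kappa_{D_j}(w_j;v_j)$ together with complex homogeneity gives $\kappa(w;\sum_j\lambda_j\widehat{u}_j)=\max_j|\lambda_j|$. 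Since $F\colon D_1\times\cdots\times D_k\to B^n(0,1)$ is distance decreasing and $\kappa_{B^n(0,1)}(0;\cdot)=\|\cdot\|$, we get $\|\sum_j\lambda_j a_j\|\leq 1$ whenever $|\lambda_j|\leq 1$. Averaging over the torus $\T^k$ with normalized Haar measure $d\lambda$ and using orthogonality of characters,
\[
\sum_{j=1}^k\|a_j\|^2\;=\;\int_{\T^k}\Big\|\sum_{j=1}^k\lambda_j a_j\Big\|^2\,d\lambda\;\leq\;1 .
\]

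For the matching lower bound, note that $F$ is a biholomorphism onto the open set $F(D_1\times\cdots\times D_k)$, so $F^{-1}$ is holomorphic on $B^n(0,r)$ with $F^{-1}(0)=w$. Distance decrease for $F^{-1}$ yields $\kappa(w;dF^{-1}_0\,a_j)\leq\kappa_{B^n(0,r)}(0;a_j)=\|a_j\|/r$; but $dF^{-1}_0 a_j=\widehat{u}_j$, whose $\kappa$-length is $1$, so $\|a_j\|\geq r$ for every $j$. Combining the two bounds,
\[
k\,r^2\;\leq\;\sum_{j=1}^k\|a_j\|^2\;\leq\;1,\qquad\text{hence}\qquad r\leq 1/\sqrt{k}\leq 1/\sqrt{m}.
\]
Taking the supremum over all admissible $F$ gives $s_{D_1\times\cdots\times D_k}(w)\leq 1/\sqrt{m}$, the desired contradiction; the polydisc, whose Kobayashi indicatrix is the unit polydisc, shows the constant $1/\sqrt{m}$ is sharp.

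The step I expect to be delicate is not the displayed linear algebra but reconciling this soft metric argument with the stated hypotheses. As written, the estimate needs only that the factors be hyperbolic, so it does not visibly use contractibility in \ref{I:contr} nor the conditions $\dim D_j\geq 2$, $n\geq 4$ in \ref{I:hi_dim}. This strongly suggests the authors' intended route is the more geometric one flagged in the introduction: rather than computing with $\kappa$ and its product formula, one realizes the extremal configuration by genuine analytic discs attached along $\partial(D_1\times\cdots\times D_k)$, and it is the existence/placement of such discs --- supplied by Globevnik's insight into pseudoconvex (Stein) geometry --- that requires the topological and dimensional hypotheses, and that simultaneously feeds the exact symmetric-domain computation and the optimal estimates advertised in the abstract. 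The main obstacle, then, is to pin down precisely where those hypotheses are essential: either I must verify that the purely metric argument above is genuinely complete (so the hypotheses are artifacts of the chosen method), or locate the analytic input --- presumably the construction of boundary discs via Globevnik --- for which contractibility and $\dim\geq 2$ are indispensable, and show it recovers the bound $\sum_j\|a_j\|^2\leq 1$ that drives the proof.
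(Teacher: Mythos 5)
Your argument is correct, and it is genuinely different from the paper's --- in fact it proves strictly more than Theorem~\ref{T:product}. The paper establishes the contrapositive by building, in each Cartesian factor, a holomorphic disc through the given point whose cluster sets at $\bdy\D$ lie in that factor's boundary: the Riemann map for planar factors (the only place contractibility is used --- a planar factor of a bounded contractible product is hyperbolic and simply connected, hence biholomorphic to $\D$), and Globevnik's proper disc (Result~\ref{R:Glob}) for factors of dimension $\geq 2$ (the only place pseudoconvexity and the hypotheses of part~\ref{I:hi_dim} are used); the product of these discs is then fed into Proposition~\ref{P:tech_upbnd}, a Kubota-style Hardy-space/Fatou boundary-value argument, to get $s_D\leq 1/\sqrt{p}$. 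You replace all of this boundary analysis by data at a single point: the product property $\kappa_{D_1\times\dots\times D_k}(w;v)=\max_j\kappa_{D_j}(w_j;v_j)$ of the Kobayashi--Royden metric (standard, see \cite{jarnickiPflug:idmca93}; the nontrivial inequality follows by precomposing the disc in the factor with smaller Kobayashi length with a dilation of $\D$), the Schwarz lemma on $\B^n$ and on $B^n(0,r)$, and orthogonality of characters on $\T^k$. Each step is sound: your two inequalities say precisely that $dF_w$ maps the Kobayashi indicatrix of the product into $B^n(0,1)$ while $(dF_w)^{-1}$ maps $B^n(0,r)$ into that indicatrix, whence $k\,r^2\leq\sum_j\|a_j\|^2\leq 1$. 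One cosmetic repair: a slice is a closed complex submanifold of a bounded domain, which is not obviously biholomorphic to a bounded domain; but all you actually use is that $\kappa_{D_j}(w_j;\cdot)$ is positive on nonzero vectors, and that does follow from the slice embedding by distance-decrease. Consequently, your closing dilemma resolves in the first of the two ways you list: the metric argument is complete, so the hypotheses of Theorem~\ref{T:product} (pseudoconvexity, contractibility, $\dim\geq 2$, $n\geq 4$) are artifacts of the paper's disc-construction method --- your proof shows that \emph{every} bounded domain biholomorphic to a product of $k$ factors satisfies $s_D\leq 1/\sqrt{k}$ everywhere. What the paper's heavier machinery buys is reusability rather than strength here: Proposition~\ref{P:tech_upbnd} accepts discs that are not Cartesian factors --- in particular the totally geodesic polydisc of Result~\ref{R:pd} inside an \emph{irreducible} bounded symmetric domain --- and that is how the upper bound in Theorem~\ref{T:bsd_all} is obtained, whereas your argument, as written, exploits genuine product structure only.
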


\begin{remark}
Since the Cartesian factors in the conclusion of Theorem~\ref{T:product}-\ref{I:hi_dim} are
\textbf{not} required to be irreducible, we can deduce\,---\,for utterly trivial reasons\,---\,from its
stated conclusion that $D$ is not biholomorphic to a product of $m$ or more factors of
dimension\,$\geq 2$. Note that an analogous observation does \textbf{not} explain how the
the conclusion of Theorem~\ref{T:product}-\ref{I:contr} is obtained; the difference between the
hypotheses of parts~\ref{I:contr} and~\ref{I:hi_dim} is substantive. 
\end{remark}  
  
Our next result presents an exact computation of $s_D$ when $D$ is any bounded symmetric domain. In this case,
as the group ${\rm Aut}(D)$ acts on $D$ transitively, $s_D$ is a constant. In this paper,
${\rm Aut}(D)$ denotes the group of biholomorphic maps of $D$ onto itself (with composition being the group
operation). This constant was computed for each
of the \textbf{classical} Cartan domains by Kubota \cite{kubota:nhicCdub82}. The squeezing
function is a recently introduced notion while the work in \cite{kubota:nhicCdub82} focuses on extending
a construction by Alexander \cite{alexander:ehibbp1978} to the classical Cartan domains. However, in the
terminology introduced above, the main computation in \cite{kubota:nhicCdub82} turns out to be that of the
squeezing constants of the latter domains. Kubota's proof relies on a well-known
realisation of a classical Cartan domain as a matricial domain. This
approach is unable to account for the two exceptional bounded symmetric domains (which have matricial realisations
as domains comprising certain matrices over the Cayley algebra). A more delicate, but essentially analogous, approach
relying upon Roos' well-known description of the exceptional Cartan domains \cite{roos:excepSymmDom} \emph{ought to}
(with some care) account for the latter domains. However:
\begin{enumerate}[leftmargin=27pt]
  \item\label{I:case-by-case} By this approach, $s_D$ would be obtained via a case-by-case analysis. It would be
  illuminating to find a unified computation that avoids the case-by-case approach.
  \item The case-by-case approach would probably not be useful for other applications.
\end{enumerate}  
In reference to (\ref{I:case-by-case}): it turns out that a more conceptual understanding of the bounded symmetric
domains \emph{does} admit a unified computation. To foreshadow what is involved: a more geometric viewpoint (see Result~\ref{R:pd})
provides the right guess for the value of $s_D$ for \textbf{all} cases. Since our method does not discriminate
between the classical Cartan domains and the exceptional ones\,---\,and since several of our tools are needed for
Theorem~\ref{T:remove_from_bsd} below\,---\,we present the following result
even though it recapitulates, in part, an existing result. 

\begin{theorem}\label{T:bsd_all}
Let $D$ be a bounded symmetric domain. Then, for any $z\in D$, $s_{D}(z) = 1/\sqrt{\rank(D)}$.
\end{theorem}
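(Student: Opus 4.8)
The plan is to establish the identity $s_D(z) = 1/\sqrt{\rank(D)}$ by proving the two inequalities separately, exploiting the transitivity of ${\rm Aut}(D)$ to reduce everything to a single base point. Since $D$ is a bounded symmetric domain, ${\rm Aut}(D)$ acts transitively, so $s_D$ is constant; thus it suffices to compute $s_D(0)$ after fixing the standard Harish-Chandra realisation of $D$ as a bounded convex circled domain in $\Cn$ centred at the origin. I would first recall (or cite from the tools developed earlier in the paper, presumably culminating in Result~\ref{R:pd}) the key geometric fact that in this realisation there is a maximal polydisc, of complex dimension $r := \rank(D)$, sitting inside $D$ through the origin, and that the geometry of $D$ relative to the inscribed and circumscribed balls is governed entirely by $r$.

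For the lower bound $s_D(0) \geq 1/\sqrt{r}$, the plan is to produce an explicit injective holomorphic map $F\colon D \to B^n$ with $F(0)=0$ such that $B^n(0, 1/\sqrt{r}) \subset F(D)$. The natural candidate is the inclusion (or a suitably normalised embedding) arising from the Harish-Chandra realisation itself: since $D$ is convex and circled with the Euclidean unit ball inscribed in a precise way dictated by the rank, one shows that after normalising so that $D$ is contained in the closed unit ball, the ball $B^n(0, 1/\sqrt{r})$ is contained in $D$. This is exactly the content one expects from the ``right guess'' foreshadowed via the geometric viewpoint, and the constant $1/\sqrt{r}$ should emerge from the fact that the bounded symmetric domain of rank $r$ contains, and is contained in, Euclidean balls whose radii differ by the factor $\sqrt{r}$, with the extremal directions being those along the diagonal of the maximal polydisc.

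For the upper bound $s_D(0) \leq 1/\sqrt{r}$, the plan is to show that \emph{no} injective holomorphic $F\colon D \to B^n$ with $F(0)=0$ can do better than $1/\sqrt{r}$. Here I would invoke a Schwarz-type / curvature obstruction: because $D$ contains a totally geodesic polydisc $\Delta^r$ through $0$ in which the Kobayashi (equivalently Bergman or Carathéodory) metric is a product metric, any squeezing map must, when restricted to this polydisc, respect the metric comparison. Composing $F$ with the extremal structure of the polydisc and applying the Schwarz lemma to each factor, one finds that the image of the diagonal direction cannot be pushed beyond radius $1/\sqrt{r}$ into the target ball. Concretely, one tests $s_D(0;F)$ against a boundary point of $D$ lying at the ``corner'' of the maximal polydisc (the image of $(1,\dots,1)/\sqrt{r}$), where the distance from $0$ to $\bdy D$ in the relevant metric is minimised; the factor of $\sqrt{r}$ arises precisely because $r$ independent unit-disc factors combine in the Euclidean norm.

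The main obstacle, and the step requiring the most care, will be the upper bound: making rigorous the claim that the squeezing map cannot outperform the polydisc estimate. The difficulty is that $F$ is an arbitrary injective holomorphic map, not an automorphism, so one cannot directly transport the polydisc structure through $F$. The resolution I anticipate is to use the holomorphic retraction onto, or the totally geodesic embedding of, the maximal polydisc together with a Schwarz-lemma comparison of the Kobayashi indicatrices: the Kobayashi indicatrix of $D$ at $0$ is, in the Harish-Chandra coordinates, a convex body whose ``worst direction'' is captured by the rank, and any $F$ into the ball with $F(0)=0$ must map this indicatrix into the unit ball (the indicatrix of $B^n$ at $0$), forcing the radius bound. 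The unified nature of the argument\,---\,its independence from the classification into classical and exceptional types\,---\,comes precisely from phrasing everything in terms of the rank and the inscribed-ball geometry, rather than any matricial realisation; this is what allows the single constant $1/\sqrt{\rank(D)}$ to cover the exceptional Cartan domains along with the classical ones.
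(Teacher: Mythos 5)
Your overall architecture (transitivity reduces to the base point; Harish--Chandra realisation plus inscribed/circumscribed balls for the lower bound; the maximal polydisc as the obstruction for the upper bound) is the right one, and your lower bound is essentially the paper's: Lemma~\ref{L:dee} produces a normalised Harish--Chandra realisation $\Dee$ with ${\rm dist}(0,\bdy\Dee)=1/\sqrt{r}$ and $\sup\{\|\xi\|:\xi\in\bdy\Dee\}=1$, so the inclusion $\Dee\hookrightarrow\B^n$ already witnesses $s_D\geq 1/\sqrt{r}$. One caveat: your blanket claim that the inscribed and circumscribed Euclidean balls differ exactly by the factor $\sqrt{r}$ is \emph{false for reducible} $D$ if each irreducible factor carries this normalisation (for $D_1\times\dots\times D_m$ the ratio comes out to $\sqrt{m\,\max_i r_i}$, not $\sqrt{\sum_i r_i}$). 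The paper handles reducible $D$ via the elementary product inequality $s_{D_1\times\dots\times D_m}\geq\big(\sum_i s_{D_i}^{-2}\big)^{-1/2}$ (Result~\ref{R:prod_elem}), which in effect rescales the $i$-th factor by $\sqrt{r_i}$; your sketch needs this (or an equivalent rank-weighted rescaling) to cover non-irreducible domains.

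The genuine gap is in your upper bound. The indicatrix comparison you invoke yields only $dF_0(D)\subseteq\B^n$ (using that the Kobayashi indicatrix of a bounded convex balanced domain at its centre is the domain itself); this constrains the \emph{differential} of $F$ and says nothing, by itself, about the radius $R$ of a ball contained in the \emph{image} $F(D)$ --- so ``forcing the radius bound'' is a non sequitur as stated. Similarly, ``testing $F$ at a corner of the polydisc'' is not meaningful, since $F$ has no extension to $\bdy D$; that is precisely the difficulty. Your route can be completed, but it needs two missing steps: (i) apply the distance-decreasing property also to $F^{-1}$ restricted to $B^n(0,R)$ to get $B^n(0,R)\subseteq dF_0(D)$, so that the \emph{linear} map $L:=dF_0$ satisfies $B^n(0,R)\subseteq L(D)\subseteq\B^n$; (ii) solve this linear extremal problem: with $e_1,\dots,e_r$ mutually orthogonal primitive tripotents, $u_j:=L(e_j)$ satisfies $\|u_j\|\geq R$ (else $(R'/\|u_j\|)e_j\in D$ fails for suitable $R'<R$), while the containment $L(\ov{D})\subseteq\ov{\B^n}$ applied to the whole Shilov torus $\big\{\sum_j z_je_j:|z_j|=1\big\}\subseteq\ov{D}$ and averaged over phases gives $\sum_j\|u_j\|^2\leq 1$, whence $rR^2\leq 1$. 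Note that your single test point $(1,\dots,1)/\sqrt{r}$ does not suffice, because the vectors $u_j$ need not be orthogonal: the phase-averaging (Parseval) step over the full torus is essential. The paper takes a different route that avoids linearisation: it composes $F$ with the totally geodesic polydisc embedding $\varphi:\D^r\to D$ from Result~\ref{R:pd}, uses completeness of the invariant metric to show that the cluster sets of the axis discs $\varphi(\bcdot\,\bas_j)$ lie in $\bdy D$, and then runs the same Parseval-type comparison on \emph{boundary values}, supplied by Fatou's theorem in $H^2$ (Proposition~\ref{P:tech_upbnd}), to get $rR^2\leq 1$. Either repair yields the theorem, but as written your argument stops short of one.
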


\begin{remark}\label{R:bsd_all}
The domains $D$ in Theorem~\ref{T:bsd_all} are not necessarily irreducible. Thus, this theorem gives
new information about an infinite family of bounded symmetric domains: i.e., finite Cartesian products of Cartan domains at
least one of which is an exceptional domain.
%in which case Kubota's work gives no information about $s_D$.
\end{remark}

\subsection{The computational viewpoint}
A second set of tools that plays a vital role in our proof of Theorem~\ref{T:bsd_all}\,---\,namely: the formalism of
Hermitian Jordan triple systems\,---\,allows us to also compute optimal estimates for the squeezing function
for a collection of domains that are associated with the bounded symmetric domains. These optimal
estimates are as follows:     

\begin{theorem}\label{T:remove_from_bsd}
Let $\OM$ be an irreducible bounded symmetric domain in $\Cn$, $n\geq 2$.
Let $S$ be a non-empty subset of $\OM$ that is
\begin{itemize}[leftmargin=22pt]
  \item either a proper analytic subvariety of $\OM$,
  \item or a compact subset of $\OM$ such that $\OM\setminus S$ is connected.
\end{itemize}
Write $D := \OM\setminus S$. Then,
\[
  \frac{\tanh(K_{\OM}(z; S))}{\sqrt{\rank(\OM)}}\,\leq\,s_D(z)\,\leq\,\tanh\big(K_{\OM}(z; S)\big) 
  \quad \forall z\in D.
\]
In particular, if $\OM \cong \B^n$, then 
$s_D(z) = \tanh\big(K_{\OM}(z; S)\big)$ for every $z\in D$.
\end{theorem}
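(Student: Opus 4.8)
The plan is to prove the two inequalities separately, after a normalisation. Both $s_D$ and the Kobayashi distance are invariant under $\mathrm{Aut}(\OM)$, and any $\varphi\in\mathrm{Aut}(\OM)$ carries $S$ to a set of the same type (proper subvariety, or compact with connected complement) while sending $D$ to $\OM\setminus\varphi(S)$. Since $\OM$ is homogeneous, I may therefore fix $z$ and assume $z=0$ in the Harish--Chandra realisation of $\OM$ (keeping the name $S$ for $\varphi(S)$), so that $\OM$ is the spectral unit ball of its associated Hermitian Jordan triple system. Write $\|\cdot\|$ for the spectral norm and $|\cdot|$ for the Euclidean norm; the facts I will use, all part of the Jordan-triple formalism already invoked for Theorem~\ref{T:bsd_all}, are that $K_{\OM}(0,w)=\tanh^{-1}\|w\|$ and that $\|w\|\le|w|\le\sqrt{\rank(\OM)}\,\|w\|$, the latter being equivalent to $\tfrac{1}{\sqrt{\rank(\OM)}}\,\OM\subset\B^n\subset\OM$. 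Set $\delta:=K_{\OM}(0;S)$, so that $\inf_{w\in S}\|w\|=\tanh\delta=:\rho$.

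For the lower bound I would test the definition of $s_D(0)$ against the scaling map $F_0(w):=w/\sqrt{\rank(\OM)}$. This is injective holomorphic with $F_0(0)=0$, and $F_0(\OM)=\tfrac{1}{\sqrt{\rank(\OM)}}\OM\subset\B^n$. Since every $w\in S$ satisfies $|w|\ge\|w\|\ge\rho$, every point of $F_0(S)$ has Euclidean norm $\ge\rho/\sqrt{\rank(\OM)}$; and the Euclidean ball of that radius lies inside $F_0(\OM)$ because $\rho\le1$ and $B^n(0,1/\sqrt{\rank(\OM)})\subset\tfrac{1}{\sqrt{\rank(\OM)}}\OM$. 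Hence $B^n(0,\rho/\sqrt{\rank(\OM)})\subset F_0(D)$, giving $s_D(0)\ge\rho/\sqrt{\rank(\OM)}=\tanh(\delta)/\sqrt{\rank(\OM)}$.

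For the upper bound, let $F\colon D\to\B^n$ be any injective holomorphic map with $F(0)=0$ and $B^n(0,r)\subset F(D)$; I must show $r\le\rho$. Because $F$ is bounded, it extends holomorphically across $S$ --- by Riemann's extension theorem when $S$ is a proper subvariety, and by the Hartogs extension theorem (here $n\ge2$ and $\OM\setminus S$ is connected) when $S$ is compact --- to a holomorphic $\widehat F\colon\OM\to\B^n$ (the image stays in $\B^n$, not merely $\overline{\B^n}$, by the open mapping theorem). Since $\OM$ is complete hyperbolic and $S$ is closed, $K_{\OM}(0,\cdot)$ attains its infimum over $S$ at some $w_0\in S$, which lies on $\partial S$ and hence in $\overline{D}$. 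Two facts then finish the argument: first, $\widehat F(w_0)\notin F(D)$, which I would prove by a local-injectivity argument --- if $\widehat F(w_0)=F(p)$ with $p\in D$, then choosing $w_k\in D$ with $w_k\to w_0$ and inverting $F$ near $p$ (where $F$ is a biholomorphism onto a neighbourhood of $F(p)$) forces $w_k\to p$, contradicting $w_k\to w_0\in\partial S$; second, by the distance-decreasing property of the Kobayashi distance for $\widehat F\colon\OM\to\B^n$, $\tanh^{-1}|\widehat F(w_0)|=K_{\B^n}(0,\widehat F(w_0))\le K_{\OM}(0,w_0)=\delta$, so $|\widehat F(w_0)|\le\rho$. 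As $B^n(0,r)\subset F(D)$ while $\widehat F(w_0)\notin F(D)$, we conclude $r\le|\widehat F(w_0)|\le\rho$. Combining the two bounds yields the displayed inequalities, and when $\OM\cong\B^n$ one has $\rank(\OM)=1$, so the bounds coincide and give the stated equality.

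The main obstacle I anticipate is the upper bound's extension step together with the non-recapture claim $\widehat F(w_0)\notin F(D)$: one must verify that the appropriate extension theorem applies in each of the two cases for $S$, that the extended map still lands in $\B^n$, and that the value $\widehat F(w_0)$ is genuinely omitted from $F(D)$. Once this is in place, the distance-decreasing inequality does all the geometric work and produces the Kobayashi distance $K_{\OM}(0;S)$ automatically, with no appeal to the rank-dependent norm comparison --- which is precisely why the upper bound carries no factor of $1/\sqrt{\rank(\OM)}$ and why the ball case is sharp.
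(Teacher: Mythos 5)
Your proposal is correct, and it splits into one half that matches the paper and one half that is genuinely different. For the \emph{lower} bound your argument and the paper's are the same mathematics in different packaging: your scaling map $F_0(w)=w/\sqrt{\rank(\OM)}$, the sandwich $\tfrac{1}{\sqrt{\rank(\OM)}}\,\OM\subset\B^n\subset\OM$, and the formula $K_{\OM}(0,\bcdot)=\tanh^{-1}\|\bcdot\|$ are exactly what the paper assembles from Lemma~\ref{L:dee} (the normalised Harish--Chandra realisation $\Dee(\OM)$), the map $\nu$ with $\nu(z)=0$, and Proposition~\ref{P:aux} (the auxiliary $\Dee$-squeezing function); composing the paper's two steps yields precisely your one-step argument, so you have merely inlined Proposition~\ref{P:aux}. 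The real divergence is the \emph{upper} bound: the paper does not prove it, but quotes Result~\ref{R:upper_est} (Deng--Guan--Zhang for $S$ a proper subvariety, Bharali for $S$ compact with connected complement), results valid for an arbitrary bounded domain $\OM$. You instead re-derive the bound: extend $F$ across $S$ (Riemann extension in the subvariety case, Hartogs in the compact case, using $n\geq 2$), check that $\widehat{F}$ maps into $\B^n$ and that $\widehat{F}(w_0)\notin F(D)$, and then let the distance-decreasing property of $K$ do the work. This is in essence a reconstruction of the proofs of the quoted results; what the paper's route buys is generality and brevity (no symmetry of $\OM$ is needed for the upper bound), while yours buys self-containedness.

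Two small points in your upper bound need patching, though neither threatens the structure of the argument. First, your assertion that the minimiser $w_0$ lies on $\bdy{S}$, hence in $\overline{D}$, needs a line of justification when $S$ is compact with non-empty interior: in the balanced realisation $K_{\OM}(0,tw_0)=\tanh^{-1}(t\|w_0\|)$ is strictly increasing in $t$, so a minimiser interior to $S$ is impossible. Second, ``the image stays in $\B^n$ by the open mapping theorem'' is not quite right for equidimensional holomorphic maps (openness requires a non-degenerate Jacobian); the standard fix is the maximum principle: if $\|\widehat{F}(w_0)\|=1$ at an interior point, then $w\mapsto\snp{\widehat{F}(w)}{\widehat{F}(w_0)}$ attains modulus $1$ at an interior point, forcing $\widehat{F}$ to be constant and contradicting injectivity of $F$ on $D$.
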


The last assertion of Theorem~\ref{T:remove_from_bsd} for the case $S$ a compact subset of
$\OM$\,$(=\B^n)$ has also been shown by \cite[Theorem~2.1]{rongYang:ogsfFisd22}.
\smallskip  

In the above theorem, and elsewhere in this paper, for any domain $\OM\varsubsetneq \Cn$, $K_{\OM}$
denotes the Kobayashi pseudodistance on $\OM$. Then, for any point $z\in \OM$ and any set $S\subseteq \OM$, 
we define
\[
  K_{\OM}(z; S)\,:=\,\inf_{w\in S}K_{\OM}(z, w).
\]
We ought to add that the bounded symmetric domain $\OM$ in Theorem~\ref{T:remove_from_bsd} is not really
required to be irreducible. Theorem~\ref{T:remove_from_bsd} is formulated as above because extending it to
$\OM$ that is non-irreducible requires additional arguments that are merely technical in nature.
\smallskip

Before stating our next result, it would help to review a few definitions. A bounded domain
is said to be \emph{holomorphic homogeneous regular} if $\inf_{z\in D}s_D(z) > 0$. The importance of
the latter property arises from the wealth of information about the intrinsic complex geometry\,---\,alluded to
above\,---\,of a domain
$D$ that one can deduce if $D$ is holomorphic homogeneous regular (see \cite{yeung:gdusf09,
dengGuanZhang:spsfbd12}, for instance).
\smallskip

A domain $D\subset \Cn$ is said to be \emph{$\C$-convex} if any non-empty intersection of $D$ with a complex line
$\Lambda\subseteq \Cn$ is a simply connected domain in $\Lambda$. Also, a domain $D\subset \Cn$ is said to be
\emph{linearly convex} (respectively, \emph{weakly linearly convex}) if for each point $a\in (\Cn\setminus D)$ (respectively,
$a\in \bdy{D}$), there exists a complex hyperplane through $a$ that does not intersect $D$. While not entirely
obvious, it turns out (see \cite[Chapter~2]{a-p-s:cccf04}) that any $\C$-convex domain is linearly convex. Clearly,
any linearly convex domain is weakly linearly convex. The converses of these two implications are not true. The latter
observation provides the context for Theorem~\ref{T:wlcd}, and is motivated by a result of Nikolov--Andreev
\cite[Theorem~1]{nikolovAndreev:bbsfCdpd17} which states that any bounded $\C$-convex domain in $\Cn$
is holomorphic homogeneous
regular\footnote{\,{Nikolov--Andreev study a formally larger family of $\C$-convex domains that includes certain unbounded
domains, but the latter are biholomorphic to bounded domains via projective transformations.}}.
From the discussion above, it is natural to ask: if $D$ is a bounded
linearly convex, or weakly linearly convex, domain in $\Cn$, then is $D$ holomorphic homogeneous regular?
That this is not the case, if $n=1$, is well known; see \cite[Theorem~5.8]{dengGuanZhang:spsfbd12}. For
$n\geq 2$, consider $D_n = \B^n\setminus \{z\in \Cn : z_n=0\}$. For each $n\geq 2$, $D_n$ is weakly linearly convex.
However, it follows from Theorem~\ref{T:remove_from_bsd} (which relies on \cite[Section~7]{dengGuanZhang:spsfbd12})
that for each $n\geq 2$, $D_n$ is \textbf{not} holomorphic homogeneous regular. However, we do have an affirmative
answer to the above question if $D$ is also assumed to be starlike. In that case, certain essential aspects of
the proof in \cite{nikolovAndreev:bbsfCdpd17} continue to work in our context. We thus have the following:

\begin{theorem}\label{T:wlcd}
Let $D\varsubsetneq \Cn$, $n\geq 2$, be a bounded, starlike weakly linearly convex domain. Then, $D$ is holomorphic
homogeneous regular.
\end{theorem}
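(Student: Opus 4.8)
The plan is to adapt the argument of Nikolov--Andreev \cite{nikolovAndreev:bbsfCdpd17}, replacing the two features of $\C$-convexity that their proof exploits\,---\,that the complex-line slices of $D$ are simply connected, and that $D$ admits supporting complex hyperplanes\,---\,by the combination of weak linear convexity and starlikeness. Since $D$ is bounded, $s_D$ is continuous and strictly positive at every point of $D$ (Deng--Guan--Zhang \cite{dengGuanZhang:spsfbd12}), so $\inf_K s_D>0$ on each compact $K\subset D$; a routine argument (if $s_D(z_k)\to 0$ then $z_k$ accumulates, by continuity and compactness of $\ov{D}$, only at $\bdy D$) shows it suffices to produce $c>0$ and $\eps_0>0$ with $s_D(z)\geq c$ whenever $\mathrm{dist}(z,\bdy D)<\eps_0$. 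After an affine change of coordinates\,---\,which preserves boundedness, starlikeness, weak linear convexity, and $s_D$\,---\,I will assume $D$ is starlike with respect to $0$ and that $B^n(0,\rho)\subseteq D\subseteq B^n(0,R)$ for fixed $0<\rho<R$.

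The key construction is the following. Fix $z\in D$ with $\delta:=\mathrm{dist}(z,\bdy D)<\eps_0$ and pick $a\in\bdy D$ with $|z-a|=\delta$. By weak linear convexity some complex hyperplane through $a$ misses $D$; as $0\in D$ this hyperplane avoids $0$, hence equals $\{w:\langle w,\nu_a\rangle=1\}$, and $B^n(0,\rho)\subseteq D\subseteq B^n(0,R)$ forces the uniform two-sided bound $1/R\leq|\nu_a|\leq 1/\rho$. Now starlikeness enters decisively: the linear functional $\phi_a(w):=\langle w,\nu_a\rangle$ sends $D$ to a bounded open subset of $\C$ that is starlike with respect to $\phi_a(0)=0$ and omits $1$, hence omits the whole ray $[1,\infty)$ and lies in the simply connected region $\C\setminus[1,\infty)$. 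Composing with the explicit conformal equivalence $g\colon\C\setminus[1,\infty)\to\D$, $g(0)=0$ (namely $\zeta\mapsto\sqrt{1-\zeta}$ onto the right half-plane, followed by a Cayley transform), and then with the Möbius automorphism $\tau$ of $\D$ taking $g(\phi_a(z))$ to $0$, yields a holomorphic $R_a:=\tau\circ g\circ\phi_a\colon D\to\D$ with $R_a(z)=0$. This $R_a$ is the starlike surrogate for the Riemann map of a $\C$-convex slice used in \cite{nikolovAndreev:bbsfCdpd17}. Because $g$ and $\tau$ are explicit, and $\mathrm{dist}\big(\phi_a(z),\bdy\phi_a(D)\big)\leq|1-\phi_a(z)|=|\langle a-z,\nu_a\rangle|\leq\delta/\rho$, a Koebe-type computation gives $|R_a'(\phi_a(z))\,\nu_a|\gtrsim 1/\delta$: the map $R_a$ expands the direction $\nu_a$ normal to $\bdy D$ at $a$ by a factor $\sim 1/\delta$.

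I will then assemble a competitor $F=(F_1,\dots,F_n)\colon D\to\C^n$ by setting $F_1:=R_a$ and choosing $F_2,\dots,F_n$ to be the affine functions $w\mapsto\ell_j(w-z)$ for linear forms $\ell_j$ of norm $\sim 1/R$ chosen so that $(\phi_a,\ell_2,\dots,\ell_n)$ is a linear isomorphism of $\C^n$; then $F(z)=0$ and $F$ is injective (it is $(g,\mathrm{id})\circ(\tau,\mathrm{id})$ precomposed with a linear isomorphism, each factor injective). Since $|R_a|<1$ and $|w-z|\leq 2R$ on $D$, a single uniform rescaling $F\mapsto\lambda F$ with $\lambda$ depending only on $R,\rho$ arranges $F(D)\subseteq\B^n$. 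The assertion $s_D(z)\geq c$ is then reduced to a single claim: $F(D)\supseteq B^n(0,c)$ for a constant $c$ independent of $z$. The tangential extent of $F(D)$ is $\sim 1$ (since $\ell_j$ has norm $\sim1/R$ and $D$ has diameter $\sim R$) and the normal extent is $\sim 1$ (by the expansion factor $1/\delta$ matching the normal thickness $\delta$ of $D$ at $z$), so the only remaining point is that these order-one extents in different directions actually surround $0$.

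The main obstacle is exactly this last, geometric, step, because there is no Koebe $1/4$-theorem in several variables: a lower bound on the inner radius of $F(D)$ at $0$ cannot be extracted from $DF(z)$ alone and must come from genuine control of the shape of $D$. Concretely, one must show that for every small $(\zeta,\xi_2,\dots,\xi_n)$ there is a $w\in D$ with $R_a(w)=\zeta$ and $\ell_j(w-z)=\xi_j$; the forms $\ell_j$ pin $w$ down to the complex line in the $\nu_a$-direction, and the solvability of $R_a(w)=\zeta$ along that line is a statement that $D$ contains a definite ``tube'' over $z$ in the normal direction. In \cite{nikolovAndreev:bbsfCdpd17} this fiberwise surjectivity is furnished by the simple connectivity and linear convexity of \emph{every} complex-line slice; here I expect to recover it from starlikeness with respect to $0$ together with the supporting hyperplane $\{\langle w,\nu_a\rangle=1\}$, using the uniform bound $1/R\leq|\nu_a|\leq 1/\rho$ and the explicit form of $g$ near the slit tip $1=\langle a,\nu_a\rangle$ to keep the inner radius bounded below uniformly as $z\to\bdy D$. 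Verifying this uniform fiberwise solvability\,---\,the role played by slice $\C$-convexity in the $\C$-convex case\,---\,is the crux of the proof.
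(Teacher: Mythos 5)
Your proposal stalls exactly where you say it does, but the situation is worse than an unverified step: for the map $F$ you construct, the claim $F(D)\supseteq B^n(0,c)$ with $c$ independent of $z$ is \emph{false}, already for $D=\B^n$ (where $s_D\equiv 1$). Take $z=(1-\delta)\bas_1$, so that $a=\bas_1$, $\nu_a=\bas_1$, and $\phi_a(w)=w_1$. Then $R_a(w)=h(w_1)$, where $h:=\tau\circ g|_{\D}$ is an \emph{injective} holomorphic self-map of $\D$ with $h(1-\delta)=0$. Suppose some point $(0,t,0,\dots,0)$ with $t>0$ lies in $F(D)$: then there is $w\in\B^n$ with $h(w_1)=0$, $\ell_2(w-z)=t/\lambda$, and $\ell_j(w-z)=0$ for $j\geq 3$. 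Injectivity of $h$ forces $w_1=1-\delta$, i.e., $w-z\in\ker\phi_a$; and $t/\lambda=|\ell_2(w-z)|\leq\|\ell_2\|\,\|w-z\|$ gives $\|w-z\|\gtrsim Rt/\lambda$. Hence $\|w\|^2=(1-\delta)^2+\|w-z\|^2>1$ once $\delta\lesssim (Rt/\lambda)^2$, a contradiction. Thus $s_{\B^n}(z;F)\lesssim\sqrt{\delta}\to 0$ as $z\to\bdy\B^n$, even though the true squeezing function is identically $1$. The defect is structural: as $z$ approaches $\bdy D$, the tangential directions must be rescaled \emph{adaptively} (for the ball, by a factor comparable to $1/\sqrt{\delta}$, which the automorphisms of $\B^n$ provide), so fixed linear functionals $\ell_j$ of norm $\sim 1/R$ can never yield a uniform inner ball. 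No amount of fiberwise analysis can rescue this particular $F$; the competitor itself must be changed.

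The missing idea\,---\,and the way the paper proceeds (via Theorem~\ref{T:gen_wlcd})\,---\,is that the fiberwise-solvability problem disappears entirely if the injective competitor is a \emph{product} map in adapted coordinates. Following Nikolov--Andreev, one first applies the Nikolov--Pflug--Zwonek normalization (Result~\ref{R:special_coords}), which rescales $n$ adapted directions by the factors $\|a^j-z\|^{-1}$; this is precisely the adaptive rescaling your $\ell_j$ lack. Weak linear convexity together with \cite[Lemma~15]{nikolovPflugZwonek:eimCcd11} gives $\{|Z_1|+\dots+|Z_n|<1\}\subseteq D(\De)$, whence $c\D^n\subseteq A(\De)$ for a constant $c$ depending only on $n$. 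Starlikeness enters only topologically: $D$ is contractible and its hyperplane sections through the star center are connected, and Lemma~\ref{L:simply_conn} converts this into simple connectivity of the coordinate \emph{projections} $\Delta_j=\pi_j(A(\De))$ (projections, not slices). With $\varphi_j$ a Riemann map of $\Delta_j$, the product $\Phi=(\varphi_1,\dots,\varphi_n)$ is injective on $A(\De)$, and the inner ball comes for free from the product structure and the Koebe theorem: $\Phi(A(\De))\supseteq\Phi(c\D^n)=\varphi_1(c\D)\times\dots\times\varphi_n(c\D)\supseteq(c/16)\D^n$. Your one-variable observation\,---\,that starlikeness makes the relevant planar sets omit a ray and hence be simply connected\,---\,is sound and is the correct elementary input, but it must be fed into all $n$ projections of the adaptively rescaled domain, not into a single functional in the normal direction.
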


In fact, Theorem~\ref{T:wlcd} is a special case of a more general result; see Theorem~\ref{T:gen_wlcd}.
\smallskip

Since the methods for proving Theorem~\ref{T:bsd_all} are significantly different from those used in
\cite{kubota:nhicCdub82}, and since some of these techniques are relevant to the proof of
Theorem~\ref{T:remove_from_bsd} as well, the next section will be dedicated to a set of results about
bounded symmetric domains. In Section~\ref{S:complex}, we shall see some vital complex-analytic lemmas
needed in our proofs. Finally, the proofs of the theorems above will be presented in
Sections~\ref{S:complex}--\ref{S:wlcd} below.

\section{Essential results on bounded symmetric domains}\label{S:bsd_essential}

The first result of this section is one of the results on bounded symmetric domains from a
``geometric viewpoint'' alluded to in Section~\ref{S:intro}. This result is the Polydisk Theorem
of Wolf \cite{wolf72:fineStructure}. As given in \cite{wolf72:fineStructure}, the Polydisk Theorem
describes the structure of a Hermitian symmetric space of non-compact type as well as of its
compact dual, and gives an explicit construction of the manifold $D$ in Result~\ref{R:pd}. Since we
do not need as detailed a result, we state here an equivalent version by Mok \cite{mok:mrtHlsm89}
of the Polydisk Theorem, paraphrased for our present setting.

\begin{result}[paraphrasing {\cite[Chapter~5--Theorem~1]{mok:mrtHlsm89}}]\label{R:pd}
Let $D$ be a bounded symmetric domain of rank $r$ and let $g$ be a K{\"a}hler metric
on $D$ that is invariant under the identity component $G_0$ of ${\rm Aut}(D)$. Then, there
exists a totally geodesic submanifold $\Delta$ of $D$ that is biholomorphic to the polydisc $\D^r$
such that $\left(\Delta, \left. g\right|_{\Delta}\right)$ is isometric to $(\D^r, \beta_{r})$\,---\,where
$\beta_{r}$ denotes the Bergman metric on $\D^r$\,---\,and such that
\[
  D\,=\,\bigcup\nolimits_{\gamma\in K}\gamma(\Delta).
\]
Here, $K$ is the maximal compact subgroup of $G_0$.
\end{result} 

There is  a natural connection between the class of bounded symmetric domains and a
certain class of algebraic structures called Hermitian Jordan triple systems. This
connection motivates the discussion in the following two subsections. The choice of
topics is dictated by the fact that our use of Hermitian Jordan triple systems \emph{is optimised
for deriving the estimates in Theorem~\ref{T:remove_from_bsd}}.

\subsection{A primer on Jordan triple systems}\label{SS:primer}
This subsection aggregates several definitions and general algebraic results needed in 
support of a part of the proof of Theorem~\ref{T:bsd_all}. Unless stated otherwise, the
contents of this section can be found in the UC-Irvine lectures by Loos \cite{loos1977:bsdJp}.

\begin{definition}
A \emph{Hermitian Jordan triple system} is a complex vector space $V$ endowed
with a ternary operation $\{\bcdot , \bcdot , \bcdot\} : V\times V\times V \to V$ that is symmetric
and bilinear in $x$ and $z$ and conjugate-linear in $y$, and satisfies the identity
\begin{align*}
  \{x,y,\{u,v,w\}\} &- \{\{x,y,u\},v,w\}\\
	        &= \{u,v,\{x,y,w\}\} - \{u,\{y,x,v\},w\} \ \ \forall x,y,u,v,w \in V
\end{align*}
(called the \emph{Jordan identity}). The structure $(V, \{\bcdot , \bcdot , \bcdot\})$
is said to be \emph{positive} if for each $x \in V \setminus \{0\}$ for which $\{x,x,x\} = \lambda x$
(where $\lambda$ is a scalar), we have $\lambda > 0$.
\end{definition}

Let $D$ be a realisation of a bounded symmetric domain of complex dimension $n$
as a bounded convex balanced domain in $\Cn$. (Recall that such a realisation always exists; a
Harish-Chandra realisation has the latter properties.) Let $(z_1, \dots, z_n)$ be
the global holomorphic coordinates coming from the product structure on $\Cn$ and let
$(\bas_1,\dots,\bas_n)$ denote the standard ordered basis of $\Cn$. Let $K_D$ denote the Bergman
kernel and $h_D$ the Bergman metric for the above realisation.
The ternary operation $\{\bcdot,\bcdot,\bcdot\}_D : \Cn \times \Cn \times \Cn \to \Cn$ obtained by
the requirement 
\begin{equation}\label{E:corrJTS}
  h_D(\{\bas_i,\bas_j,\bas_k\}_D, \bas_l) = \left.\frac{\partial^4 \log
  K_D(z,z)}{\partial z_i\partial\zbar_j\partial z_k \partial \zbar_l}\right|_{z=0}, \quad i, j, k, l = 1,\dots, n,
\end{equation}
and by extending $\C$-linearly in the first and third variables, and $\C$-antilinearly in the second, 
has the property that
$(\Cn,\{\bcdot,\bcdot,\bcdot\}_D)$ is a positive Hermitian Jordan triple system
(abbreviated hereafter as PHJTS). This relationship is a one-to-one correspondence
between finite-dimensional PHJTSs and bounded symmetric domains (see Section~\ref{SS:jts_bsd}).
\smallskip

Let $(V,\{\bcdot,\bcdot,\bcdot\})$ be a HJTS. It will be convenient to work with
the operators
\begin{equation}\label{E:ops}
  \dee(x,y)z = \qu(x,z)y := \{x,y,z\}.
\end{equation}
We define the map $Q:V \to \text{\em End}_{\text{anti}}(V)$ by $Q(x)y := \qu(x,x)y/2$
(here, $\text{\em End}_{\text{anti}}(V)$ denotes the space of all $\C$-antilinear endomorphisms
of $V$). For any $x \in V$, we can define the so-called \emph{odd powers} of $x$ recursively by:
\[
  x^{(1)} := x \quad\text{and} \quad x^{(2p+1)} := Q(x)x^{(2p-1)} \text{ if } p \geq 1.
\]
A vector $e \in V$ is called a \emph{tripotent} if $e^{(3)} = e$.
\smallskip

Tripotents are important to the present discussion because:
\begin{itemize}[leftmargin=22pt]
  \item Tripotents exist in abundance in a finite-dimensional PHJTS.
  \item Tripotents will enable us to give a description of the boundary of a bounded symmetric
  domain that will be helpful in proving Theorem~\ref{T:bsd_all}. 
\end{itemize}
We refer the interested reader to \cite[Chapter 3]{loos1977:bsdJp} for details of the first
fact. However, this fact is indirectly  implied by Result~\ref{R:spec}, which, given a
finite-dimensional PHJTS $(V,\{\bcdot,\bcdot,\bcdot\})$ and any vector $x\in V\setminus \{0\}$, presents
a certain canonical decomposition of $x$ in terms of tripotents. To this end, we need a couple of
additional notions. First: given a HJTS $(V,\{\bcdot,\bcdot,\bcdot\})$, we say that two tripotents
$e_1, e_2 \in V$ are \emph{orthogonal} if $\dee(e_1,e_2) \equiv 0$. Second: given
$x \in V$, we define the real vector space $\ll\!x\!\gg$ by 
\[
  \ll\!x\!\gg\,:=\,\text{span}_{\R}\{ x^{(2p+1)} : p = 0,1,2,\dots\}.
\]
These two notions allow us to state the following:

\begin{result}[\textsc{Spectral decomposition theorem}]\label{R:spec}
Let $(V,\{\bcdot,\bcdot,\bcdot\})$ be a finite-dimensional PHJTS. Then, each $x
\in V\setminus\{0\}$ can be written uniquely as 
\begin{equation}\label{E:spec}
  x = \lambda_1(x) e_{x,\,1} + \dots + \lambda_s(x) e_{x,\,s},
\end{equation}
such that $\lambda_1(x) > \lambda_2(x) > \dots > \lambda_s(x) > 0$
and $e_{x,\,1},\dots,e_{x,\,s}$ are pairwise orthogonal non-zero tripotents. In fact,
in the above decomposition, $e_{x,\,1},\dots,e_{x,\,s}$ belong to $\ll\!x\!\gg$.  
\end{result}

The decomposition of $x \in V$ given by \eqref{E:spec} is called the
\emph{spectral decomposition} of $x$. It can be shown that the assignment
\[
  V\ni x \longmapsto \lambda_1(x),
\]
where $\lambda_1(x)$ is as given by \eqref{E:spec}, is a norm on $V$. This norm is called the
\emph{spectral norm} on $V$.
\smallskip

\begin{remark}
The number $s$ in Result~\ref{R:spec} depends, in general, on $x$. If $x$ is a non-zero tripotent,
then it is easy to see that the $s$ corresponding to it is $1$. One may deduce from examples of 
$(V,\{\bcdot,\bcdot,\bcdot\})$ introduced in Section~\ref{SS:jts_bsd} that there exist points for which $s\geq 2$. 
The chief purpose of Result~\ref{R:spec} is to introduce the spectral norm; we shall, thus, not dwell further on
the values of $s$.
\end{remark}

Next, we present another general result about a HJTS. When applied to the PHJTS
referred to by \eqref{E:corrJTS}, it will be the final ingredient needed to describe the geometry
of the boundary of an irreducible bounded symmetric domain.

\begin{result}[\textsc{Pierce decomposition}]\label{R:pierce}
Let $(V,\{\bcdot,\bcdot,\bcdot\})$ be a HJTS and let $e \in V$ be a tripotent. Then, the
spectrum of $\dee(e,e)$ is a subset of $\{0,1,2\}$. Let 
\[
  V_j = V_j(e) := \{ x \in V : \dee (e,e)x = jx\}, \; \; j \in \mathbb{Z}.
\]
Then:
\begin{enumerate}[leftmargin=27pt, label=$(\alph*)$]
  \item\label{I:pd} $V = V_0 \oplus V_1 \oplus V_2$.
  \item We have the relation
	  $\{V_{\alpha},V_{\beta},V_{\gamma}\} \subset V_{\alpha-\beta + \gamma}$.
  \item $V_0, V_1$ and $V_2$ are Hermitian Jordan subsystems of
  $(V, \{\bcdot,\bcdot,\bcdot\})$.
\end{enumerate}
\end{result}
The direct-sum decomposition $(a)$ given by the above result is called the \emph{Pierce
decomposition of $V$ with respect to the tripotent $e$}. The ideas that go into proving
the Pierce decomposition theorem allow us to construct a special partial order on the set
of tripotents of
$(V,\{\bcdot,\bcdot,\bcdot\})$ when the latter is a PHJTS.
Let $e,e' \in V$ be tripotents. We say 
that \emph{$e$ is dominated by $e'$} ($e \preceq e'$) if there is a tripotent $e_1$
orthogonal to $e$ such that $e' = e + e_1$. We say that \emph{$e$ is strongly 
dominated by $e'$} ($e \prec e'$) if $e \preceq e'$ and $e \neq e'$. The two results of 
interest in this regard are the following:

\begin{result}\label{R:po}
Let $(V,\{\bcdot,\bcdot,\bcdot\})$ be a HJTS. Let $e_1,e_2 \in V$ be orthogonal tripotents and let $e = e_1 + e_2$.
If $e^\prime \in V$ is a tripotent orthogonal to $e$, then $e^\prime$ is orthogonal to $e_1$ and $e_2$.
\end{result}

\begin{result}\label{R:po_max}
Let $(V,\{\bcdot,\bcdot,\bcdot\})$ be a PHJTS. Then:
\begin{enumerate}[leftmargin=27pt, label=$(\alph*)$]
  \item The relation $\preceq$ is a partial order on the set of tripotents.
  \item\label{I:max} A tripotent $e\in V$ is maximal with respect to the partial order $\preceq$
  if and only if the Pierce space $V_0(e) = 0$. 
\end{enumerate}
\end{result}

We clarify a small point about terminology: if $(V,\{\bcdot,\bcdot,\bcdot\})$ is a PHJTS, then
a tripotent in $V$ is said to be \emph{minimal} if it is minimal with respect to $\preceq$ among the
non-zero tripotents. In \cite{loos1977:bsdJp}, a minimal tripotent is called a \emph{primitive} tripotent.
\smallskip

Let us now assume that $(V,\{\bcdot,\bcdot,\bcdot\})$ is positive and finite dimensional. Given
any non-zero tripotent $e\in V$, it follows from finite-dimensionality, Result~\ref{R:spec}, and the
repeated application of Result~\ref{R:po} that $e$ can be written as a sum of mutually orthogonal primitive
tripotents. This brings us to the final algebraic concept in this subsection:

\begin{definition}\label{D:rank}
Let $(V,\{\bcdot,\bcdot,\bcdot\})$ be a finite-dimensional PHJTS. Let $e\neq 0$ be a tripotent. The
\emph{rank of $e$} is the minimum number of mutually orthogonal primitive tripotents required to
express $e$ as a sum of such tripotents. The \emph{rank of $(V,\{\bcdot,\bcdot,\bcdot\})$} is the
highest rank that a tripotent of $V$ can have.
\end{definition}

\subsection{Relating Jordan triple systems to bounded symmetric domains}\label{SS:jts_bsd}
We begin this subsection by stating rigorously the one-to-one correspondence alluded to following
\eqref{E:corrJTS}. This correspondence is described as follows (we clarify that a domain
$\OM\subseteq \Cn$ is said to be \emph{balanced} if, for any $z\in \OM$, the set
$\{\zeta z : \zeta\in \overline\D\}\subset \OM$):

\begin{result}[\cite{loos1977:bsdJp}, Theorem~4.1]\label{R:corresp}
Let $D$ be a realisation of a bounded symmetric domain as a bounded convex balanced domain
in $\Cn$. Then, $D$ is the open unit ball in $\Cn$ with respect to the spectral norm given by the
PHJTS $(\Cn, \{\bcdot,\bcdot,\bcdot\}_D)$ (determined by the relations \eqref{E:corrJTS} above). Conversely,
given a PHJTS $(\Cn,\{\bcdot,\bcdot,\bcdot\})$, the open unit ball $\Ba$ with respect to the spectral
norm determined by it is a bounded symmetric domain with
$\{\bcdot,\bcdot,\bcdot\} = \{\bcdot,\bcdot,\bcdot\}_{\Ba}$.
Furthermore, if $D$ is as above, then:
\begin{enumerate}[leftmargin=27pt, label=$(\alph*)$]
  \item\label{I:ranks} The rank of $D$ is the rank of the PHJTS $(\Cn, \{\bcdot,\bcdot,\bcdot\}_D)$. 
  \item\label Write $K := \{g \in {\rm Aut}(D) : g(0) = 0\}$ and fix a $K$-invariant inner
  product $\innp{\bcdot}{\bcdot}$ on $\Cn$. If $e_1, e_2\in \Cn$ are two tripotents of
  $(\Cn, \{\bcdot,\bcdot,\bcdot\}_D)$ that are orthogonal as defined in Section~\ref{SS:primer},
  then $\innp{e_1}{e_2} = 0$.
  \item If, additionally, $D$ is irreducible, then $K$ acts transitively on each of the
  sets $\{e\in \Cn\setminus \{0\} :  e \ \text{is a tripotent of $(\Cn, \{\bcdot,\bcdot,\bcdot\}_D)$ and
  $\rank(e) = j$}\}$, $j=1,\dots, \rank(D)$.
\end{enumerate}
\end{result}

\begin{remark}
The above result presupposes that if $D$ is as in the hypothesis, then the elements of $K$
are $\C$-linear maps. This is immediate from a result of H.~Cartan since $D$, by hypothesis, is a
circular domain. The result also presupposes that there exists a $K$-invariant
inner product on $\Cn$. That such an inner product exists is a basic fact that can be established
either by using the machinery in Section~\ref{SS:primer} or using the fact that a Hermitian symmetric
space is a homogeneous space for its isometry group.
\end{remark} 

In what follows in this subsection, whenever we mention a bounded symmetric domain
$D$, \emph{it will be understood that $D$ is a bounded convex balanced realisation}.
\smallskip

Given a bounded symmetric domain $D\subset \Cn$, one can give an explicit
description of $\bdy{D}$ with the aid of the machinery described in Section~\ref{SS:primer}.
To this end, we need some notation. Fix a bounded symmetric
domain $D\subset \Cn$. Let $M^*_D$ be the set of all non-zero tripotents of
$(\Cn, \{\bcdot,\bcdot,\bcdot\}_D)$, and let $|\bcdot|_D$ denote the spectral
norm determined by $\{\bcdot,\bcdot,\bcdot\}_D$.
Define
\begin{align*}
 E_D\,&:=\,\{(e,v)\in \Cn\times\Cn: e\in M^*_D \ \text{and} \ v\in V_0(e)\}, \\
 \bund_D\,&:=\,\{(e,v)\in E_D : |v|_D < 1\}.
\end{align*}
We can write $\bund_D$ as a disjoint union of the form
\begin{equation}\label{E:stratB}
 \bund_D\,:=\,\bigsqcup_{\alpha\in \smoo}\bund_{D,\,\alpha},
\end{equation}
where $\smoo$ is the set of connected components of $M^*_D$. It turns out
(see \cite[Chapters~5,~6]{loos1977:bsdJp}) that each 
$\bund_{D,\,\alpha}$ is a connected, real-analytic submanifold of $\Cn\times \Cn$
that is a real-analytic fibre bundle whose fibres are unit $|\bcdot|_D$-discs. The result
relevant to our needs is:

\begin{result}[\cite{loos1977:bsdJp}, Chapter~6; \cite{vigue1991:domainesSymetriques}, 
Th{\'e}or{\`e}me~7.3]\label{R:stratIrr}
Let $D$ be an irreducible bounded symmetric domain in $\Cn$ of rank $r$. Then, we have the following:
\begin{enumerate}[leftmargin=27pt, label=$(\alph*)$]
  \item $\smoo$ has cardinality $r$.
  \item\label{I:bdy} Each connected component of $\bund_D$
  is a bundle over a real-analytic submanifold that comprises non-zero tripotents of the same
  rank $j$, $j = 1,\dots, r$. Denoting this bundle as $\bund_{D,\,j}$, 
  $\bdy{D}$ is given by
  \[
   \bdy{D}\,=\,\bigsqcup_{j=1}^r\str_{D,\,j},
  \]
  where $\str_{D,\,j} := \{e+v : (e,v)\in \bund_{D,\,j}\}$.
  \item\label{I:shilov} The set $\str_{D,\,r}$ is the Shilov boundary of $D$.
  \item\label{I:dist} Fix an inner product $\innp{\bcdot}{\bcdot}$ on $\Cn$ as 
  in part~$(b)$ of Result~\ref{R:corresp}. Then, the Shilov boundary of $D$ is
  the set of points in $\overline{D}$ having the maximum distance from $0$ with respect to the distance induced
  by $\innp{\bcdot}{\bcdot}$.       
 \end{enumerate}
\end{result}

Calculations of the expressions and algebraic objects introduced here
and in Section~\ref{SS:primer} range from simple to very involved. A case-by-case treatment in
proving Theorems~\ref{T:bsd_all} and~\ref{T:remove_from_bsd} (especially when the exceptional bounded
symmetric domains are included) would thus be quite laborious. Circumventing the latter is the purpose of
the rather general discussion in this section. That said, some calculations in the case of a
bounded symmetric domain that is \textbf{not} $B^n(0,1)$ or $\D^n$ might be helpful in illustrating
the above notions. To this end, we provide the following references:
\begin{itemize}[leftmargin=22pt]
  \item A list of every finite-dimensional PHJTS $(V,\{\bcdot,\bcdot,\bcdot\})$ that can be associated with an
  \textbf{irreducible} bounded symmetric domain (by Result~\ref{R:corresp}),
  and explicit expressions of $Q(x)$, $x\in V$, are given in \cite[pp.~\!4.11--4.12]{loos1977:bsdJp}.
  
  \item For any PHJTS associated with a classical Cartan domain
  $D$ of type ${\rm I}_{p,\,q}$ ($p\leq q$), ${\rm II}_n$, or ${\rm III}_n$, an expression for the spectral
  norm $|\bcdot|_{D}$ is computed in \cite[pp.~\!4.15--4.16]{loos1977:bsdJp}.
 
  \item The formalism of tripotents is very efficient in proving
  results of a geometric nature such as Result~\ref{R:stratIrr}, but computing Pierce decompositions,
  in general, requires certain inputs  not mentioned above. However, for the PHJTS $(V,\{\bcdot,\bcdot,\bcdot\})$ 
  associated with a classical Cartan domain of type ${\rm I}_{p,\,q}$ ($p\leq q$) (i.e.,
  \[
    V\,=\,\C^{p\times q} \quad\text{and}
    \quad \{A, B, C\}\,:=\,A(\overline{B}^{\sf T})C,
  \]
  where $A, B, C$ are $p\times q$ matrices and the right-hand side involves matrix
  multiplication), the reader is referred to \cite[pp.~\!5.9--5.10]{loos1977:bsdJp}
  for computations of some special Pierce decompositions.
\end{itemize}
\smallskip    

The following result is the second ingredient needed in proving several of the theorems stated
in Section~\ref{S:intro}. It would be familiar to the specialists in the field. However, it might be
helpful to demonstrate how it follows from the commonly-known facts about irreducible bounded
symmetric domains. Note: that inner products $\innp{\bcdot}{\bcdot}$, with the normalisation
stated below, exist follows from part~$(c)$ of Result~\ref{R:corresp} (all primitive tripotents are,
by definition, of rank $1$). 

\begin{proposition}\label{P:distances}
Let $D$ be a realisation of an irreducible bounded symmetric domain as a bounded convex balanced
domain in $\Cn$. Let $r$ be the rank of $D$ and let $K$ be as in Result~\ref{R:corresp}.
Fix a $K$-invariant inner product $\innp{\bcdot}{\bcdot}$ normalised so that all primitive
tripotents are of length $1$, and write $\|x\|_{D} := \innp{x}{x}^{1/2}$.
Then, $\|x\|_D = \sqrt{r}$ for every $x$ in the Shilov boundary of $D$. Furthermore,
\begin{equation}\label{E:distances}
  \inf\{\|\xi\|_D : \xi \in \bdy{D}\}\,=\,1, \quad\text{and} \quad
  \sup\{\|\xi\|_D : \xi \in \bdy{D}\}\,=\,\sqrt{r}.
\end{equation}
\end{proposition}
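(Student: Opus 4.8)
The plan is to deduce all three statements from the spectral decomposition (Result~\ref{R:spec}) together with a single geometric input\,---\,part~$(b)$ of Result~\ref{R:corresp}, which converts Jordan-triple orthogonality into $\innp{\cdot}{\cdot}$-orthogonality. The computation I would use repeatedly is the following: if $e$ is a tripotent of rank $k$, then by Definition~\ref{D:rank} we may write $e = c_1 + \dots + c_k$ with the $c_i$ mutually orthogonal primitive tripotents; since pairwise-orthogonal tripotents are $\innp{\cdot}{\cdot}$-orthogonal (part~$(b)$ of Result~\ref{R:corresp}) and each $c_i$ has $\|c_i\|_D = 1$ by our normalisation, the Pythagorean theorem gives $\|e\|_D^2 = \sum_{i=1}^k\|c_i\|_D^2 = k$; that is, $\|e\|_D = \sqrt{\rank(e)}$.

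I would first settle the Shilov-boundary assertion and the supremum together, after identifying the Shilov boundary with the set of rank-$r$ tripotents. By parts~\ref{I:shilov} and~\ref{I:bdy} of Result~\ref{R:stratIrr}, the Shilov boundary is $\str_{D,\,r} = \{e+v : (e,v)\in\bund_{D,\,r}\}$, whose fibre over a rank-$r$ tripotent $e$ is $\{v\in V_0(e) : |v|_D < 1\}$. A rank-$r$ tripotent is maximal for $\preceq$ (a strict dominant of it would be a sum of more than $r$ mutually orthogonal primitive tripotents, exceeding the rank of $D$), so $V_0(e) = 0$ by part~\ref{I:max} of Result~\ref{R:po_max}; hence $v = 0$, and the Shilov boundary is exactly the set of rank-$r$ tripotents. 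The computation above then gives $\|x\|_D = \sqrt{r}$ for every such $x$, which is the first claim. For the supremum, I apply part~\ref{I:dist} of Result~\ref{R:stratIrr} with the very inner product we have fixed: the Shilov boundary is precisely the set of points of $\overline{D}$ at maximal distance from $0$, so that maximal distance equals $\sqrt{r}$. As the Shilov boundary lies in $\bdy D\subset\overline{D}$, the supremum of $\|\cdot\|_D$ over $\bdy D$ equals $\sqrt{r}$ and is attained.

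For the infimum, I use that $D$ is the open unit ball of the spectral norm (Result~\ref{R:corresp}), so $\bdy D = \{x : |x|_D = 1\}$ and every $\xi\in\bdy D$ has leading spectral value $\lambda_1(\xi) = 1$. Writing its spectral decomposition $\xi = \sum_{i=1}^s\lambda_i(\xi)\,e_{\xi,\,i}$, the components $e_{\xi,\,i}$ are pairwise orthogonal tripotents, hence $\innp{\cdot}{\cdot}$-orthogonal, and $\|e_{\xi,\,i}\|_D^2 = \rank(e_{\xi,\,i})\geq 1$ by the computation above. Therefore
\[
  \|\xi\|_D^2 \,=\, \sum_{i=1}^s\lambda_i(\xi)^2\,\rank(e_{\xi,\,i}) \,\geq\, \lambda_1(\xi)^2\,\rank(e_{\xi,\,1}) \,\geq\, \lambda_1(\xi)^2 \,=\, 1 .
\]
Since any primitive tripotent $c$ lies in $\bdy D$ (as $|c|_D = 1$) and has $\|c\|_D = 1$, this bound is attained, so the infimum equals $1$.

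The step I expect to require the most care is the orthogonality bookkeeping behind the computation $\|e\|_D = \sqrt{\rank(e)}$ and its use in the spectral decomposition: one must verify both that Jordan-triple orthogonality transfers to $\innp{\cdot}{\cdot}$ (this is exactly part~$(b)$ of Result~\ref{R:corresp}) and that the primitive tripotents occurring in the decompositions of distinct spectral components remain mutually orthogonal, the latter following by iterating Result~\ref{R:po}. The only other delicate point is the vanishing of the fibre $V_0(e)$ that identifies the Shilov boundary with the bare rank-$r$ tripotents, which rests on the maximality criterion in part~\ref{I:max} of Result~\ref{R:po_max}.
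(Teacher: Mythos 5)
Your proof is correct, and for the Shilov-boundary claim and the supremum it follows the paper's own route: identify the Shilov boundary $\str_{D,\,r}$ with the set of rank-$r$ tripotents (maximality of a rank-$r$ tripotent forces $V_0(e)=0$ by part~$(b)$ of Result~\ref{R:po_max}, so the fibre variable $v$ vanishes), compute $\|e\|_D^2=r$ by Pythagoras via part~$(b)$ of Result~\ref{R:corresp}, and then invoke part~$(d)$ of Result~\ref{R:stratIrr}. Where you genuinely diverge is the infimum. The paper stays inside the stratification of Result~\ref{R:stratIrr}: it writes an arbitrary $y\in\bdy{D}$ as $y=e+v$ with $e$ a tripotent of some rank $j$ and $v\in V_0(e)$, and argues $\innp{e}{v}=0$ because $e\in V_2(e)$ and $v\in V_0(e)$ lie in eigenspaces of $\dee_D(e,e)$ for distinct eigenvalues, whence $\|y\|_D^2=\innp{e}{e}+\innp{v}{v}\geq j\geq 1$. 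You instead use Result~\ref{R:corresp} (that $\bdy{D}$ is the unit sphere of the spectral norm, so $\lambda_1(\xi)=1$) together with the spectral decomposition of Result~\ref{R:spec}, obtaining $\|\xi\|_D^2=\sum_i\lambda_i(\xi)^2\,\rank(e_{\xi,\,i})\geq \lambda_1(\xi)^2=1$. The trade-off: the paper's step $\innp{e}{v}=0$ tacitly requires that eigenspaces of $\dee_D(e,e)$ for distinct eigenvalues be $\innp{\bcdot}{\bcdot}$-orthogonal (i.e., self-adjointness of $\dee_D(e,e)$ with respect to the $K$-invariant inner product), a fact the paper does not justify; your argument needs only the tripotent-orthogonality statement of part~$(b)$ of Result~\ref{R:corresp}, which is explicitly on record. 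Conversely, the paper's version avoids Result~\ref{R:spec} altogether and yields the finer stratum-wise bound $\|y\|_D^2\geq j$. Both arguments attain the infimum at a primitive tripotent.

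One small repair to your write-up: your parenthetical reason that a rank-$r$ tripotent is $\preceq$-maximal (``a strict dominant of it would be a sum of more than $r$ mutually orthogonal primitive tripotents, exceeding the rank of $D$'') is not immediate from Definition~\ref{D:rank}, since rank is there defined as a \emph{minimum} over such decompositions; a priori a longer decomposition does not force larger rank. Your own norm identity closes this gap: if $e'=e+e_1$ with $e_1\neq 0$ a tripotent orthogonal to $e$, then $\|e'\|_D^2=\|e\|_D^2+\|e_1\|_D^2\geq r+1$, while $\|e'\|_D^2=\rank(e')\leq\rank(D)=r$, a contradiction. (The paper itself asserts the equivalence ``maximal if and only if rank $r$'' without proof, so your account is no worse off; but since you offered a reason, it should be this one.)
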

\begin{proof}
Fix a point $x$ belonging to the Shilov boundary of $D$. By part~$(c)$ of Result~\ref{R:stratIrr},
$x\in \str_{D,\,r}$. Recall that, by definition, a tripotent of $(\Cn, \{\bcdot,\bcdot,\bcdot\}_D)$
is maximal if and only if its rank is $r$. Thus,
by the description of the sets $\str_{D,\,j}$, $j = 1,\dots, r$, and by part~$(b)$
of Result~\ref{R:po_max}, we have
\[
  x\,=\,e_1+\dots+ e_r,
\]
where $e_1,\dots, e_r$ are mutually orthogonal primitive (hence non-zero) tripotents. By
part~$(b)$ of Result~\ref{R:corresp}, we have
\begin{equation}\label{E:shilov}
  \|x\|_{D}^2\,=\,\innp{e_1+\dots+ e_r}{e_1+\dots+ e_r}\,=\,r.
\end{equation}
This, in view of part~$(d)$ of Result~\ref{R:stratIrr}, establishes that
$\sup\{\|\xi\|_D : x \in \bdy{D}\}\,=\,\sqrt{r}$.
\smallskip

Now, fix a point $y\in \bdy{D}$. By part~$(b)$ of Result~\ref{R:stratIrr}, there exists
a number $j\in \{1,\dots, r\}$, a tripotent $e$ of rank $j$ and a $v\in V_0(e)$ such that
$|v|_{D}<1$ and $y = e+v$. Write $\dee_D(e,e) := \{e, e, \bcdot\}_D$. Clearly, $e \in V_2(e)$. As
$e$ and $v$ belong to eigenspaces corresponding to distinct eignevalues of $\dee_D(e,e)$,
$\innp{e}{v} = 0$. Thus,
\begin{equation}\label{E:bdy_low_bnd}
  \|y\|^2_{D}\,=\,\innp{e+v}{e+v}\,=\,\innp{e}{e}+\innp{v}{v}\,\geq\,j\,\geq\,1.
\end{equation}
Appealing to part~$(b)$ of Result~\ref{R:stratIrr} once again,
any primitive tripotent $\boldsymbol{{\sf e}}$ of $(\Cn, \{\bcdot,\bcdot,\bcdot\}_D)$ belongs
to $\str_{D,\,1} \subseteq \bdy{D}$. Clearly $\|\boldsymbol{{\sf e}}\|_D = 1$. Combining this
with \eqref{E:bdy_low_bnd}, we get
$\inf\{\|\xi\|_D : x \in \bdy{D}\}\,=\,1$.
\end{proof}

\section{Complex-analytic lemmas}\label{S:complex}

This section is devoted to several supporting results that will be needed in the proofs of our main theorems.
In what follows $\|\bcdot\|$ will denote the Euclidean norm and, given any point $x \in \C^n$ and
$S\subset \C^n$, ${\rm dist}(x, S)$ will denote the Euclidean distance between $x$ and $S$.
\smallskip

We begin with a proposition that we shall need several times. It resembles
\cite[Lemma~1]{kubota:nhicCdub82}. For the purposes for which we will need Proposition~\ref{P:tech_upbnd},
we \textbf{cannot} assume that the map $\varphi$ featured therein is affine (which is tacitly the case in
\cite[Lemma~1]{kubota:nhicCdub82}) or that $D$ is homogeneous. Thus, we re-examine
\cite[Lemma~1]{kubota:nhicCdub82} closely and combine it with Fatou's Theorem to get:
 
\begin{proposition}\label{P:tech_upbnd}
Let $D$ be a bounded domain in $\Cn$ and let $z\in D$. Suppose there exists a positive integer
$p$ and a bounded holomorphic map $\varphi : \D^p\to D$ such that $\varphi(0)=z$ and
such that for each $\xi\in \bdy\D$
and $j=1,\dots, p$, the cluster set
\begin{equation}\label{E:cluster}
  \smoo(\varphi_j, \xi)\,:=\,\left\{w\in \C^n : \exists \{\zt_{\nu}\}\subset \D \;
  								\text{s.t. $\lim_{\nu\to \infty}\zt_{\nu} = \xi$
								and $\lim_{\nu\to \infty}\varphi_j(\zt_{\nu}) = w$}\right\}
								\subseteq \bdy{D},
\end{equation}
where $\varphi_j(\zt) := \varphi(\zt\bas_j)$, $\zt\in \D$. Then,
$s_D(z) \leq 1/\sqrt{p}$.
\end{proposition}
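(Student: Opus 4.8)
The plan is to bound $s_D(z;F)$ for an arbitrary competitor $F$ and then take the supremum over $F$. Fix an injective holomorphic $F\colon D\to B^n(0,1)$ with $F(z)=0$, and fix any $r>0$ with $B^n(0,r)\subseteq F(D)$; the goal is to show $r\leq 1/\sqrt{p}$. Since $F$ is injective and holomorphic, it is (by Osgood's theorem) a biholomorphism onto the open set $F(D)$, so $F^{-1}$ is continuous there. I would set $\Phi:=F\circ\varphi\colon \D^p\to B^n(0,1)$, a bounded holomorphic map with $\Phi(0)=0$, and study its coordinate-axis slices $\Phi_j(\zt):=\Phi(\zt\bas_j)$, $\zt\in\D$.

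The crux is the boundary estimate: for each $j$ and almost every $\xi\in\bdy\D$, the radial limit $\Phi_j^*(\xi)$ (which exists a.e.\ by Fatou's theorem, each component of $\Phi_j$ being a bounded holomorphic function on $\D$) satisfies $\|\Phi_j^*(\xi)\|\geq r$. I would establish this by contradiction: if $\|\Phi_j^*(\xi)\|<r$ for some radial limit, then $\Phi_j^*(\xi)\in B^n(0,r)\subseteq F(D)$, so $w_0:=F^{-1}(\Phi_j^*(\xi))\in D$; since $F(\varphi_j(s\xi))=\Phi_j(s\xi)\to F(w_0)$ as $s\to 1^-$ and $F(w_0)$ is an interior point of the open set $F(D)$, continuity of $F^{-1}$ forces $\varphi_j(s\xi)\to w_0\in D$, placing $w_0$ in the cluster set $\smoo(\varphi_j,\xi)$ and contradicting the hypothesis $\smoo(\varphi_j,\xi)\subseteq\bdy D$. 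This step\,---\,translating the cluster-set hypothesis on $\varphi$ into a quantitative lower bound for the boundary modulus of $\Phi$\,---\,is where the real content lies and is the main obstacle; everything after it is bookkeeping.

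To finish, expand each component in a power series $\Phi^{(k)}(\zt)=\sum_{\alpha\in\N^p}c^{(k)}_\alpha\zt^\alpha$. Boundedness $\|\Phi(\zt)\|^2\leq 1$ on $\D^p$, together with Parseval over the torus (letting the polyradii tend to $1$), yields $\sum_k\sum_{\alpha}|c^{(k)}_\alpha|^2\leq 1$. On the other hand, the slice $\Phi_j$ involves only the monomials $\alpha=m\bas_j$, and $c^{(k)}_0=\Phi^{(k)}(0)=0$, so Parseval on the disc combined with the boundary estimate gives $\sum_k\sum_{m\geq1}|c^{(k)}_{m\bas_j}|^2=\tfrac{1}{2\pi}\int_0^{2\pi}\|\Phi_j^*\|^2\,d\tht\geq r^2$ for each $j$. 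Summing over $j=1,\dots,p$ and noting that the index sets $\{m\bas_j:m\geq1\}$, $j=1,\dots,p$, are pairwise disjoint\,---\,so their contributions form a subsum of $\sum_k\sum_\alpha|c^{(k)}_\alpha|^2$\,---\,I obtain $p\,r^2\leq\sum_k\sum_\alpha|c^{(k)}_\alpha|^2\leq1$, i.e.\ $r\leq1/\sqrt p$. As this holds for every admissible $r$ and every such $F$, we conclude $s_D(z)\leq 1/\sqrt p$.
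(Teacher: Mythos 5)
Your proof is correct and follows essentially the same route as the paper's: a Parseval upper bound for the full power series of $F\circ\varphi$, Fatou plus the cluster-set hypothesis to get the lower bound $r^2$ for the coefficient sum of each axis slice $F\circ\varphi_j$, and disjointness of the index sets $\{m\bas_j : m\geq 1\}$ to conclude $p\,r^2\leq 1$. The only cosmetic difference is in the key boundary step: you argue by contradiction using continuity of $F^{-1}$ on $F(D)$ (Osgood), while the paper phrases the same point as $F$ being proper onto its image\,---\,interchangeable justifications of the identical estimate.
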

\begin{proof}
Let $F: D\to \B^n$ be an injective holomorphic map satisfying $F(z) = 0$. Let $R > 0$ be such that $B^n(0,R) \subseteq F(D)$.
Write $g = (g_1,\dots, g_n) := F\circ \varphi$. Each $g_i$, $i = 1,\dots, n$, is a holomorphic function on $\D^p$ and,
hence, admits a power-series development
\[
  g_i(Z)\,=\,\sum_{\alpha\in \N^p}  C^{(i)}_{\alpha}Z^{\alpha}, \quad Z\in \D^p
\]
---\,where $C^{(i)}_{(0,\dots, 0)} = 0$ for $i = 1,\dots, n$\,---\,which converges uniformly on compact
subsets of $\D^p$. In fact, as $g_i$ is a bounded function, it belongs to the Hardy space $H^2(\D^p)$, and
combining this fact with the above expansion, we have:
\begin{align}
  \sum_{i=1}^n\left[\sum\nolimits_{\alpha\in \N^p}|C^{(i)}_{\alpha}|^2\right]\,&=\,\lim_{r\to 1^-}
  \sum_{i=1}^n\frac{1}{(2\pi)^p}
  \int_0^{2\pi}\dots \int_0^{2\pi} |g_i(re^{i\theta_1},\dots re^{i\theta_p})|^2\,d\theta_1\dots d\theta_p \notag \\
  &=\,\lim_{r\to 1^-}\frac{1}{(2\pi)^p}
  \int_0^{2\pi}\dots \int_0^{2\pi} \|g(re^{i\theta_1},\dots re^{i\theta_p})\|^2\,d\theta_1\dots d\theta_p \notag \\
  &\leq\,1. \label{E:u_bd_1}
\end{align}
The last inequality is a consequence of the fact that, by construction, $\|g(Z)\|<1$ for every $Z\in \D^p$.%
\smallskip

Now, let $g_{i,\,j}$ be such that $F\circ \varphi_j =: (g_{1,\,j},\dots, g_{n,\,j})$,
$j= 1,\dots, p$. As each $g_{i,\,j}$ is a bounded holomorphic
function, $g_{i,\,j}\in H^2(\D)$. Thus, by Fatou's Theorem, the limit
\[
  \bval{g}{i\,j}(e^{i\tau})\,:=\,\lim_{r\to 1^-}g_{i,\,j}(re^{i\tau}) \; \; \text{exists for a.e. $\tau\in [0, 2\pi)$},
\]
$\bval{g}{i,\,j} \in \leb{2}(\bdy\D)$, and
\[
  \lim_{r\to 1^-}\|g_{i,\,j}(re^{i\,\bcdot}) - \bval{g}{i,\,j}\|_{\leb{2}(\bdy\D)}\,=\,0
\]
for each
$i=1,\dots, n$ and $j= 1,\dots, p$. Note that by the condition \eqref{E:cluster} and as $F$ is a proper map (onto its image),
we have
\[
  \big(\bval{g}{1,\,j}(e^{i\tau}),\dots, \bval{g}{n,\,j}(e^{i\tau})\big)\in \bdy{F(D)} \; \; \text{for a.e. $\tau\in [0, 2\pi)$},
\]
for each $j=1,\dots, p$. Combining this with the previous three facts gives us
\begin{align}
  \sum_{i=1}^n\left[\sum\nolimits_{k\in \N}|C^{(i)}_{k\bcdot \bas_j}|^2\right]\,&=\,\sum_{i=1}^n
   \frac{1}{2\pi}\int_0^{2\pi} |\bval{g}{i,\,j}(e^{i\tau})|^2\,d\tau \notag \\
   &=\,\frac{1}{2\pi}\int_0^{2\pi}\left\|\big(\bval{g}{1,\,j}(e^{i\tau}),\dots, \bval{g}{n,\,j}(e^{i\tau})\big)\right\|^2\,d\tau
   \notag \\
   &\geq\,R^2, \label{E:l_bd_RR}
\end{align}
for each $j=1,\dots, p$. Comparing \eqref{E:u_bd_1} and \eqref{E:l_bd_RR}, we have
\begin{align*}
  pR^2\,\leq\,\sum_{i=1}^n\sum_{j=1}^p\left[\sum\nolimits_{k\in \N}|C^{(i)}_{k\bcdot \bas_j}|^2
  \right]\,&\leq\,\sum_{i=1}^n\left[\sum\nolimits_{\alpha\in \N^p}|C^{(i)}_{\alpha}|^2\right] \\
  &\,\leq\,1.
\end{align*}
Owing to the descriptions of $R$ and $F$ above, we conclude from above that $s_D(z)\leq 1/\sqrt{p}$.
\end{proof}

%\begin{remark}
Note that the $\varphi$ above is merely a holomorphic map; it is not required to be a biholomorphic
or a proper holomorphic map. Thus, there is no requirement that $p\leq n$ (indeed, the greater $p$ is,
the more informative is the conclusion of Proposition~\ref{P:tech_upbnd}). In fact, the proof of
Proposition~\ref{P:tech_upbnd} is insensitive to the value of $p>0$. The non-trivial constraint imposed
on $\varphi$ is the condition \eqref{E:cluster}.%
%\end{remark}
 
\smallskip

Recall that a Harish-Chandra realisation of a bounded symmetric domain $D\subset \C^n$ is a
certain realisation of $D$ as a bounded convex balanced domain in $\Cn$ and is determined
uniquely up to a $\C$-linear isomorphism of $\C^n$.
When $D$ is irreducible, it is easy\,---\,making a linear change of coordinate if needed\,---\,to construct
a Harish-Chandra realisation $\Dee \cong D$ such that every point in the Shilov boundary of $\Dee$ is at
unit Euclidean distance from $0\in \Cn$. The first part of the next lemma is a justification of this fact.
(Here, given any domain $\OM\varsubsetneq \Cn$, $\bdy_S{\Omega}$ will denote its Shilov boundary.)        

\begin{lemma}\label{L:dee}
Let $D$ be a realisation of an irreducible bounded symmetric domain as a bounded convex balanced
domain in $\Cn$. Let $r$ be the rank of $D$.
\begin{enumerate}[leftmargin=27pt, label=$(\alph*)$]
  \item Then there exists a Harish-Chandra realisation $\Dee \cong D$
  such that every point in $\bdy_S\Dee$ is at unit Euclidean distance from $0\in \Cn$.
  \item ${\rm dist}(0, \bdy\Dee) = 1/\sqrt{r}$.
  \item $\sup\{\|\xi\| : \xi\in \bdy\Dee\} = 1$.
\end{enumerate}
\end{lemma}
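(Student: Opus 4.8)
The plan is to deduce all three assertions from Proposition~\ref{P:distances}, transporting its conclusions through a single $\mathbb{C}$-linear change of coordinates chosen to turn the $K$-invariant inner product of that proposition into a scalar multiple of the Euclidean one.

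First I would fix a Harish-Chandra realisation $D_0$ of the irreducible bounded symmetric domain underlying $D$; such a realisation exists, is bounded convex balanced, satisfies $D_0 \cong D$, and therefore has the same rank $r$. Since $D_0$ is a bounded convex balanced realisation, Proposition~\ref{P:distances} supplies a $K$-invariant inner product $(\,\cdot\mid\cdot\,)$ on $\mathbb{C}^n$, normalised so that every primitive tripotent has length $1$, whose norm $\|\cdot\|_{D_0} := (\,\cdot\mid\cdot\,)^{1/2}$ satisfies $\|x\|_{D_0} = \sqrt{r}$ for $x\in\partial_S D_0$, together with $\inf\{\|\xi\|_{D_0} : \xi\in\partial D_0\} = 1$ and $\sup\{\|\xi\|_{D_0} : \xi\in\partial D_0\} = \sqrt{r}$.

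Writing $(\,x\mid y\,) = y^{*}Hx$ for a positive-definite Hermitian matrix $H$, I would set $A := r^{-1/2}H^{1/2}$, an invertible $\mathbb{C}$-linear map with $A^{*}A = r^{-1}H$, so that $\|Ax\|^{2} = x^{*}A^{*}Ax = r^{-1}\|x\|_{D_0}^{2}$, i.e. $\|Ax\| = r^{-1/2}\|x\|_{D_0}$ for all $x$, where $\|\cdot\|$ is the Euclidean norm. Define $\mathscr{D} := A(D_0)$. Being a $\mathbb{C}$-linear image of a Harish-Chandra realisation, $\mathscr{D}$ is again a Harish-Chandra realisation (by the uniqueness-up-to-$\mathbb{C}$-linear-isomorphism recalled before the lemma) and $\mathscr{D}\cong D_0\cong D$. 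As $A$ is a linear homeomorphism of $\mathbb{C}^n$, it carries $\partial D_0$ onto $\partial\mathscr{D}$ and, being biholomorphic, carries $\partial_S D_0$ onto $\partial_S\mathscr{D}$. Part (a) is then immediate: each $\xi\in\partial_S\mathscr{D}$ is $\xi = Ax$ with $x\in\partial_S D_0$, whence $\|\xi\| = r^{-1/2}\|x\|_{D_0} = r^{-1/2}\sqrt{r} = 1$. Parts (b) and (c) follow by taking the infimum and the supremum of $\|\xi\| = r^{-1/2}\|x\|_{D_0}$ over $\xi = Ax\in\partial\mathscr{D}$ and inserting the extremal values above, using that $\mathrm{dist}(0,\partial\mathscr{D}) = \inf\{\|\xi\| : \xi\in\partial\mathscr{D}\}$.

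The step I expect to need the most care is the claim that $\mathscr{D} = A(D_0)$ remains a Harish-Chandra realisation: this rests on reading ``determined uniquely up to a $\mathbb{C}$-linear isomorphism'' as the assertion that the Harish-Chandra realisations form a single $GL(n,\mathbb{C})$-orbit of bounded convex balanced domains, which is therefore closed under $\mathbb{C}$-linear isomorphisms and contains $A(D_0)$. The auxiliary facts\,---\,that a linear isomorphism preserves both the topological and the Shilov boundaries, and that $A^{*}A = r^{-1}H$\,---\,are routine.
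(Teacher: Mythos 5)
Your proposal is correct and follows essentially the same route as the paper: both apply Proposition~\ref{P:distances} to a bounded convex balanced realisation, conjugate the $K$-invariant inner product into $r^{-1}$ times the Euclidean one via the positive square root of its Gram matrix scaled by $1/\sqrt{r}$, and transport the extremal boundary distances through this linear isomorphism (your $A$ is the paper's $\Lambda_D = T_B/\sqrt{r}$). The only cosmetic difference is that you pass through an auxiliary Harish-Chandra realisation $D_0$, whereas the paper applies the construction directly to $D$ itself\,---\,which is already a bounded convex balanced realisation, so Proposition~\ref{P:distances} applies to it without the detour.
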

\begin{proof}
With $K$ as defined in Result~\ref{R:corresp}, \text{fix} a $K$-invariant inner product $\innp{\bcdot}{\bcdot}$
normalised so that all primitive tripotents are of length $1$.
Let $\snp{\bcdot}{\bcdot}$ denote the standard Hermitian inner product on $\Cn$. Write
$\mathcal{B} := (\bas_1,\dots, \bas_n)$ (the standard ordered basis on $\C^n$). Finally, let $A$ be the
strictly positive definite $n\times n$ matrix such that, if $T_{A}$ is the linear isomorphism of $\C^n$ with the
matrix representation
\[
  \left[T_{A}\right]_{\mathcal{B}}\,=\,A,
\]
then
\begin{equation}\label{E:innp_snp}
  \innp{x}{y}\,=\,\snp{T_{A}x}{y} \quad \forall x, y\in \Cn.
\end{equation}
Let $B$ denote the positive square root of $A$: i.e., the unique positive definite matrix $B$
satisfying $B^{\dagger}B = A$.
Here, $B^{\dagger}$ is the transpose of the conjugate of $B$ (the latter matrix exists because $A$
is positive definite and self-adjoint with respect to $\snp{\bcdot}{\bcdot}$). Now, if $T_{B}$ is the linear
isomorphism of $\C^n$ whose matrix representation relative to $\mathcal{B}$ is $B$, then define
\[
  \Lambda_D\,:\,x \longmapsto \frac{T_{B}(x)}{\sqrt{r}} \quad \forall x\in \Cn,
\]
and let $\Dee := \Lambda_D(D)$. By \eqref{E:innp_snp} and by the definition of $T_B$, we have:
\begin{equation}\label{E:innp_snp2}
  \snp{\Lambda_D(v)}{\Lambda_D(v)}\,=\,\snp{T_{A}(v)/\sqrt{r}}{v/\sqrt{r}}\,=\,r^{-1}\innp{v}{v}
  \quad \forall v\in \Cn.
\end{equation}

As $\Lambda_D$ is a biholomorphism of $\Cn$, $\Dee \cong D$ and $\bdy_S\Dee = \Lambda_D(\bdy_S{D})$.
Now, fix a point $y\in \bdy_S\Dee$. Then, there is a unique point $x \in \bdy_S{D}$
such that $y = \Lambda_D(x)$. By \eqref{E:innp_snp2} and Proposition~\ref{P:distances}, we have:
\[
  \|y\|^2\,=\,\snp{\Lambda_D(x)}{\Lambda_D(x)}\,=\,r^{-1}\innp{x}{x}\,=\,1.
\]
Since this is true for any arbitrary $y\in \bdy_S\Dee$, part~$(a)$ is established.
Also note that
\[
  \inf\{\|w\| : w\in \bdy\Dee\}\,=\,\inf\{\|\Lambda_D(z)\| : z\in \bdy{D}\}.
\]
Due to this and \eqref{E:distances}, we can now argue exactly as above to
calculate $\|\Lambda_D(z)\|^2$, $z\in \bdy{D}$, to deduce part~$(b)$.
Lastly, as
\[
  \sup\{\|w\| : w\in \bdy\Dee\}\,=\,\sup\{\|\Lambda_D(z)\| : z\in \bdy{D}\},
\]
the same argument establishes part~$(c)$.
\end{proof}

In view of Lemma~\ref{L:dee}, we shall, given an irreducible bounded symmetric
domain $D$, call the Harish-Chandra realisation $\Dee$ associated with it by Lemma~\ref{L:dee}
a \emph{normalised Harish-Chandra realisation}. %(as alluded to in Section~\ref{S:intro}).
\textbf{Fix} such a realisation and denote it by $\Dee$. At this
point, we will need to introduce a class of auxiliary functions, which are analogues of the squeezing
function. Specifically, with $\Dee$ as above, the \emph{$\Dee$-squeezing function} of a bounded domain
$D\Subset \C^n$, denoted by $\squee{\Dee}_D$, is defined as
\[
  \squee{\Dee}_D(z)\,:=\,\sup\left\{\squee{\Dee}_D(z;F) \mid F: D\to \Dee \;
  \text{is an injective holomorphic map s.t. $F(z)=0$}\right\}
\]
where, for each $F: D\to \Dee$ as above, $\squee{\Dee}_D(z;F)$ is given by
\[
  \squee{\Dee}_D(z;F)\,:=\,\sup\{r>0 : r\Dee\subset F(D)\}.
\]
The idea of using auxiliary squeezing functions focused not on the unit ball but on some other bounded
balanced domain is natural; see \cite{guptaPant:sfcp20} by Gupta--Pant for a comparison between one
such auxiliary function and the squeezing function. The above auxiliary functions might shed some light
on the need for considering a normalised realisation $\Dee$: we work with $\Dee$ for the same reason that, while
a realisation of an $n$-dimensional rank-one bounded symmetric domain as a bounded convex balanced domain
in $\Cn$ is not unique, we most often work with $\B^n$. A normalised Harish-Chandra realisation
of a bounded symmetric domain is convenient to work with, as the proof of the following
lemma will demonstrate.

\begin{proposition}\label{P:aux}
Let $\Dee \subset \Cn$ be as described by Lemma~\ref{L:dee}. Let $D\subset \Cn$ be a bounded domain. Then
\begin{equation}\label{E:squeez_bounds}
  \frac{s_D(z)}{\sqrt{\rank(\Dee)}}\,\leq\,\squee{\Dee}_D(z)\,\leq\,\sqrt{\rank(\Dee)}s_D(z)
  \quad \forall z\in D.
\end{equation}
\end{proposition}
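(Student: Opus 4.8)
The plan is to deduce both inequalities in \eqref{E:squeez_bounds} from a single geometric fact about $\Dee$ and then run two essentially dual scaling arguments. Writing $r := \rank(\Dee)$, the fact I would isolate first is the Euclidean nesting
\[
  B^n(0, 1/\sqrt{r})\,\subseteq\,\Dee\,\subseteq\,B^n(0,1).
\]
The left inclusion is immediate from part~$(b)$ of Lemma~\ref{L:dee}: for any open set containing $0$, the Euclidean ball about $0$ of radius ${\rm dist}(0,\bdy\Dee) = 1/\sqrt{r}$ lies inside it. For the right inclusion I would use part~$(c)$, namely $\sup\{\|\xi\| : \xi\in\bdy\Dee\} = 1$, together with the fact that $\Dee$ is convex and balanced: its Minkowski functional $\mu$ is then a norm with $\Dee = \{\mu < 1\}$, so for $z\in\Dee\setminus\{0\}$ the point $z/\mu(z)$ lies on $\bdy\Dee$ and hence has Euclidean norm at most $1$, giving $\|z\| = \mu(z)\|z/\mu(z)\| < 1$. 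This containment is the only step that genuinely uses the structure of $\Dee$, and I expect it to be the main (though modest) obstacle; everything after it is formal.

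For the lower bound I would start from a near-optimal competitor for $s_D(z)$: an injective holomorphic $F: D\to B^n(0,1)$ with $F(z)=0$ and $B^n(0,s)\subseteq F(D)$ for $s$ close to $s_D(z)$. The point is that $\tilde F := (1/\sqrt{r})F$ is then admissible for $\squee{\Dee}_D(z)$, since it is injective holomorphic, fixes $z\mapsto 0$, and maps into $B^n(0,1/\sqrt{r})\subseteq\Dee$. Because $\tilde F(D)\supseteq B^n(0, s/\sqrt{r})$ and $\Dee\subseteq B^n(0,1)$ forces $(s/\sqrt{r})\Dee\subseteq B^n(0,s/\sqrt{r})$, one reads off $\squee{\Dee}_D(z;\tilde F)\geq s/\sqrt{r}$; letting $s\to s_D(z)$ gives $\squee{\Dee}_D(z)\geq s_D(z)/\sqrt{r}$.

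For the upper bound I would argue in the reverse direction, exploiting that $\Dee\subseteq B^n(0,1)$ makes any competitor for the $\Dee$-squeezing function automatically a competitor for $s_D$. Concretely, take an injective holomorphic $G: D\to\Dee$ with $G(z)=0$ and $t\,\Dee\subseteq G(D)$ for $t$ close to $\squee{\Dee}_D(z)$; then $G$ maps $D$ into $B^n(0,1)$, and $B^n(0,1/\sqrt{r})\subseteq\Dee$ yields $B^n(0, t/\sqrt{r}) = t\,B^n(0,1/\sqrt{r})\subseteq t\,\Dee\subseteq G(D)$, so $s_D(z)\geq s_D(z;G)\geq t/\sqrt{r}$. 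Letting $t\to\squee{\Dee}_D(z)$ gives $\squee{\Dee}_D(z)\leq\sqrt{r}\,s_D(z)$, completing the proof. The only care needed throughout is the bookkeeping of the suprema (passing to near-optimal competitors and then to the limit) and the verification that $\tilde F$ and $G$ are legitimate admissible maps, both of which are routine once the nesting is in hand.
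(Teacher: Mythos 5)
Your proof is correct and follows essentially the same route as the paper's: both directions rest on the nesting $B^n(0,1/\sqrt{r})\subseteq\Dee\subseteq B^n(0,1)$ from parts~$(b)$ and~$(c)$ of Lemma~\ref{L:dee}, with the map $F/\sqrt{\rank(\Dee)}$ serving as the competitor for $\squee{\Dee}_D$ and any competitor for $\squee{\Dee}_D$ serving directly as one for $s_D$. Your extra care in justifying $\Dee\subseteq B^n(0,1)$ via the Minkowski functional of the convex balanced domain is a detail the paper leaves implicit, but it does not change the argument.
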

\begin{proof}
Fix $z\in D$. For $\OM\in \{\B^n, \Dee\}$, define the class 
\[
  \schl_D(z; \OM)\,:=\,\{\Phi : D\to \OM \mid \Phi \,\text{is an injective holomorphic map s.t. $\Phi(z) = 0$}\}.
\]
Let $F\in \schl_D(z; \B^n)$ and let $r > 0$ be such that $B^n(0, r)\subseteq F(D)$. 
Then,
by part~$(c)$ of Lemma~\ref{L:dee}
\begin{equation}\label{E:inclusions}
  r\Dee\,\subseteq\,B^n(0,r)\,\subseteq\,F(D).
\end{equation}
Consider the map $\widetilde{F} := F/\sqrt{\rank(\Dee)}$. By part~$(b)$ of Lemma~\ref{L:dee},
and as $F(D)\subseteq \B^n$, we conclude that
\begin{align*}
  \widetilde{F}\,&\in\,\schl_D(z; \Dee),  \\
  \widetilde{F}(D)\,&\supseteq\,\frac{r}{\sqrt{\rank(\Dee)}}\,\Dee && [\text{by \eqref{E:inclusions} above}].
\end{align*}
The first relation above establishes that
\[
  \left\{F/\sqrt{\rank(\Dee)} : F\in \schl_D(z; \B^n)\right\}\,\subseteq\,\schl_D(z; \Dee).
\]
From this, and in view of the definitions of $s_D$ and $\squee{\Dee}_D$, the second relation above implies:
\begin{equation}\label{E:1st_ineq}
  \frac{s_D(z)}{\sqrt{\rank(\Dee)}}\,\leq\,\squee{\Dee}_D(z).
\end{equation}

Now consider $G\in \schl_D(z; \Dee)$ and let $s > 0$ be such that $s\Dee\subseteq G(D)$. Then,
\begin{align*}
  G\,&\in\,\schl_D(z; \B^n),  \\
  G(D)\,&\supseteq\,B^n\big(0, s/\sqrt{\rank(\Dee)}\big) && [\text{by part~(b) of Lemma~\ref{L:dee}}].
\end{align*}
The first of the latter two relations establishes that
\[
  \schl_D(z; \Dee)\,\subseteq\,\schl_D(z; \B^n).
\]
Thus, in view of the definitions of $s_D$ and $\squee{\Dee}_D$, the second relation above implies:
\begin{equation}\label{E:2nd_ineq}
  \frac{\squee{\Dee}_D(z)}{\sqrt{\rank(\Dee)}}\,\leq\,s_D(z).
\end{equation}
Since \eqref{E:1st_ineq} and \eqref{E:2nd_ineq} are true for an arbitrary $z\in D$, \eqref{E:squeez_bounds}
follows.
\end{proof} 

The final result in this section is a paraphrasing of an important calculation in
\cite{nikolovPflugZwonek:eimCcd11} by Nikolov \emph{et al.} that will be essential to our proof of
Theorem~\ref{T:wlcd}. To state this result, we need some notation, in presenting which we
follow the notation used in \cite[Section~1]{nikolovPflugZwonek:eimCcd11}. Let $D\varsubsetneq \Cn$
be a domain. Given a point $z_0\in D$, there exist $\C$-linear subspaces $H_0,\dots, H_{n-1}$,
and points $a^1,\dots, a^n\in \bdy{D}$, which depend on $z_0$ (but are not necessarily unique),
such that
\begin{align*}
  H_0\,&:=\,\Cn, \\
  \|a^1\!-\!z_0\|\,&=\,\sup\left\{r>0 : B_r(H_0, z_0)\subset D\right\}, \\ 
  H_j\,&:=\,\Cn \ominus {\rm span}_{\C}\{(a^1-z_0),\dots, (a^j-z_0)\}, &&\text{\hspace{-1.7cm}and} \\
  \|a^{j+1}\!-\!z_0\|\,&=\,\sup\left\{r>0 : B_r(H_j, z_0)\subset D\right\}, &&\text{\hspace{-1.7cm}$j=1,\dots,n-1$},
\end{align*}
where, given a complex subspace $V\subseteq \Cn$, $\Cn\ominus V$ denotes the orthogonal complement
of $V$ in $\Cn$ with respect to the standard Hermitian inner product on $\Cn$, and
\[
  B_r(V, z_0)\,:=\,\{z\in \Cn : (z-z_0)\in V \text{ and } \|z-z_0\|< r\}
\]
With the above notation, we can now state the following result, which summarises a computation presented at
the beginning of the proof of \cite[Theorem~13]{nikolovPflugZwonek:eimCcd11}. The latter theorem has been stated
for $\C$-convex domains. However, for what the result below states, one only requires that:
\begin{enumerate}[leftmargin=27pt]
  \item\label{I:not_intersect} For each $a^j$, there exists a complex hyperplane $W_{j-1}$ such that $(a^j + W_{j-1})$ does not
  intersect $D$.
  \item\label{I:ortho} The complex line ${\rm span}_{\C}\{(a^{j+1}-z_0)\}$ is orthogonal to $(W_j\cap H_j)$, $j=0,\dots,
  n-1$.
\end{enumerate}
The condition \eqref{I:not_intersect} is satisfied by any bounded weakly linearly convex domain
$D\varsubsetneq \Cn$. Then, \eqref{I:ortho} is assured by the construction that identifies the points
$a^1,\dots, a^n$. Thus we have the following  

\begin{result}[Nikolov \emph{et al.} \cite{nikolovPflugZwonek:eimCcd11}]\label{R:special_coords}
Let $D$ be a bounded weakly linearly convex domain and let $z_0\in D$. Let $a^j = a^j(z_0)\in \bdy{D}$,
$j=1,\dots, n$, be the points described above, and let ${\sf U}^{(z_0)}$ be the unitary transformation
such that
\[
  {\sf U}^{(z_0)}(a^j - z_0)\,=\,\|a^j-z_0\|\bas_j \quad \text{for } j=1,\dots, n.
\]
Fix complex hyperplanes $W_j = W_j(z_0)$, $j=1,\dots, n$, such that
$(a^j+W_{j-1})\cap \overline{D} = \{a^j\}$. Then, there exists a unique a $\C$-linear transformation
$A^{(z_0)}$ such that $[A^{(z_0)}]_{{\rm std.}}$ is a lower triangular matrix each of whose diagonal entries
is $1$, such that
\begin{multline*}
  A^{(z_0)}\circ {\sf diag}(\|a^1-z_0\|^{-1},\dots, \|a^n-z_0\|^{-1})
  \circ {\sf U}^{(z_0)}((a^j-z_0)+W_{j-1})\,=\,\{(Z_1,\dots, Z_n)\in \Cn : Z_j=1\} \\
  \quad \text{for } j=1,\dots, n,
\end{multline*}
and such that
\[
  1\in \bdy\big(\pi_j\circ A^{(z_0)}\circ {\sf diag}(\|a^1-z_0\|^{-1},\dots, \|a^n-z_0\|^{-1})
  \circ {\sf U}^{(z_0)}(D-z_0)\big) \quad \text{for } j=1,\dots, n.
\]
\end{result}

To clarify some notation in the above statement: each $\bas_j$ is a vector in $(\bas_1,\dots, \bas_n)$, which
is the standard ordered basis of $\Cn$; given any linear transformation $T: \Cn\to \Cn$, $[T]_{{\rm std.}}$ denotes the
matrix representation of $T$ with respect to the standard basis; and $\pi_j$ denotes the projection
of $\Cn$ onto the $j$-th coordinate, $j=1,\dots, n$.

\section{The proof of Theorem~\ref{T:bsd_all}}
Before we prove Theorem~\ref{T:bsd_all} we must state an elementary result. Its proof follows from the
definition of the squeezing function, and whose proof is implicit in \cite{kubota:nhicCdub82}.

\begin{result}\label{R:prod_elem}
Let $D_i\subset \C^{n_i}$, $i=1,\dots, m$, be bounded domains. Then
\[
  s_{(D_1\times\dots\times D_m)}(z_1,\dots, z_m)\,\geq\,\left(
  							\sqrt{\frac{1}{s_{D_1}(z_1)^2}+\dots +\frac{1}{s_{D_m}(z_m)^2}}\,\right)^{-1}
\]
for every $(z_1,\dots, z_m)\in D_1\times\dots\times D_m$.
\end{result}

We are now in a position to give

\begin{proof}[The proof of Theorem~\ref{T:bsd_all}]
Since ${\rm Aut}(D)$ acts transitively on $D$, by definition $s_D : D\to [0,1]$ is a constant, which we shall
denote by $s_D$. Write $r := \rank(D)$.
\smallskip

Fix a K{\"a}hler metric $g$ that is $G_0$-invariant and let $\Delta$ be the totally geodesic
submanifold described by Result~\ref{R:pd}. Let $\varphi : \D^r\to \Delta$ be a biholomorphic map such that
\begin{equation}\label{E:isometry}
  \varphi^*\!\left(\left. g\right|_{\Delta}\right)\,=\,\beta_r.
\end{equation}
Let $d_g$ be the geodesic distance determined by $g$. Fix $j$, $1\leq j\leq r$, and write
$\varphi_j := \varphi(\bcdot\,\bas_j)$. Then, as $\Delta$ is totally geodesic with respect to $(D, g)$,
by the definition of the Bergman metric $\beta_r$ and by \eqref{E:isometry}, we have: 
\[
  d_g(\varphi_j(0), \varphi_j(\zt))\,=\,\poin(0, \zt)\,=\,\tanh^{-1}(|\zt|) \quad \forall \zt\in \D,
\]
where $\poin$ denotes the Poincar{\'e} distance on $\D$. Then, for $\xi\in \bdy\D$ and any sequence 
$\{\zt_{\nu}\}\subset \D$ such that $\zt_{\nu}\to \xi$, we have
\[
  d_g(\varphi_j(0), \varphi_j(\zt_{\nu}))\,=\,\tanh^{-1}(|\zt_{\nu}|)\,\to\,+\infty \; \; 
  \text{as $\nu\to \infty$.}
\]
Thus, the cluster set $\smoo(\varphi_j, \xi) \subseteq \bdy{D}$, and this is true for any 
$j=1,\dots, r$, and any $\xi\in \bdy\D$. Thus, by Proposition~\eqref{P:tech_upbnd},
\begin{equation}\label{E:D_upper}
  s_D\,=\,s_D(\varphi(0))\,\leq\,1/\sqrt{\rank(D)}.
\end{equation}

If $D$ is not irreducible, then $D=D_1\times\dots \times D_m$, where each $D_i$
is an \textbf{irreducible}
bounded symmetric domain. Recall: the squeezing function is preserved by biholomorphic maps. Thus,
we may assume without loss of generality that $D$, if irreducible, is a normalised Harish-Chandra
realisation and that, if $D$ is not irreducible, then each $D_i$ is a normalised Harish-Chandra
realisation of the $i$-th factor. For this paragraph, let
\[
  \Dee\,:=\,\begin{cases}
  			D, &\text{if $D$ is irreducible}, \\
  			\text{\textbf{any} irreducible factor of $D$}, &\text{otherwise}.
  			\end{cases}
\]
Then, by part~$(b)$ of Lemma~\ref{L:dee},
\[
  B^{\dim(\Dee)}(0, 1/\sqrt{\rank(\Dee)})\,\subseteq\,\Dee,
\]
whence, by definition, $s_{\Dee} \geq 1/\sqrt{\rank(\Dee)}$.
From this and from \eqref{E:D_upper}, the result follows in case $D$ is irreducible.
\smallskip

If $D$ is not irreducible, then, in view of Result~\ref{R:prod_elem} and our last inequality,
\begin{align*}
  s_D\,\geq\,\left(\sqrt{\frac{1}{s_{D_1}^2}+\dots +\frac{1}{s_{D_m}^2}}\,\right)^{-1}
  \,&=\,1/\sqrt{\rank(D_1)+\dots +\rank(D_m)} \\
  &=\,1/\sqrt{\rank(D)}.
\end{align*}
This, together with \eqref{E:D_upper}, establishes the result in the non-irreducible case as well.
\end{proof}

\section{The proof of Theorem~\ref{T:remove_from_bsd}}
We reiterate here that the bounded symmetric domain $\OM$ in Theorem~\ref{T:remove_from_bsd} is not required
to be irreducible. The latter theorem is stated in the way that it is because:
\begin{itemize}[leftmargin=22pt]
  \item dealing with $\OM$ non-irreducible requires\,---\,in establishing an
  auxiliary result\,---\,additional arguments that are merely technical in nature, and
  \item there are no new ideas required in the case when $\OM$ is non-irreducible.
\end{itemize}
The ``auxiliary result'' alluded to is Proposition~\ref{P:aux}. The other essential input to
proving Theorem~\ref{T:remove_from_bsd} is the following

\begin{result}\label{R:upper_est}
Let $\OM$ be a bounded domain in $\C^n$, $n\geq 2$. For each of the two cases of
$S$:
\begin{enumerate}[leftmargin=27pt, label=$(\alph*)$]
  \item {\rm (\cite[Theorem~7.1]{dengGuanZhang:spsfbd12})} $S$ a proper analytic subvariety of $\OM$,
   \item\label{I:cpt} {\rm (\cite[Theorem~1.9]{bharali:nfhhrdsqsf21})} $S$ a compact subset of $\OM$ such that 
   $\OM\setminus S$ is connected,
\end{enumerate}
writing $D := \OM\setminus S$, we have
\begin{equation}\label{E:u_bound}
  s_D(z)\,\leq\,\tanh\big(K_{\OM}(z; S)\big) 
  \quad \forall z\in D.
\end{equation}
\end{result}

\begin{remark}
When $S$ is as in \ref{I:cpt} above, the estimate provided by \cite[Theorem~1.9]{bharali:nfhhrdsqsf21} is
\[
  s_D(z)\,\leq\,\tanh\big(K_{\OM}(z; S\cap \bdy{D})\big)
  \quad \forall z\in D.
\]
Now, if ${\rm int}(S) = \varnothing$, then $(S\cap \bdy{D})=S$. As $\Omega$ is bounded, and hence is Kobayashi
hyperbolic, the Euclidean topology and the $K_{\Omega}$-topology coincide (see 
\cite[Section~3.3]{jarnickiPflug:idmca93}, for instance). In particular, $S$ is compact with respect to the
$K_{\Omega}$-topology and the interior of $S$  with respect to the $K_{\Omega}$-topology equals ${\rm int}(S)$.
Based on this, a standard argument shows that $K_{\OM}(z; S\cap \bdy{D}) = K_{\OM}(z; S)$ for any $z\in D$ even 
if ${\rm int}(S)\neq \varnothing$. Hence the inequality \eqref{E:u_bound}.
\end{remark}   

With these preliminaries, we can provide the 

\begin{proof}[The proof of Theorem~\ref{T:remove_from_bsd}]
Fix $z\in D$. As $\OM$ is a bounded symmetric domain, it has a realisation as a  bounded convex balanced domain
in $\Cn$. Then, by Lemma~\ref{L:dee}, $\OM$ has an associated normalised Harish-Chandra realisation $\Dee(\OM)$
with the properties given by Lemma~\ref{L:dee}. Let $\nu: \OM\to \Dee(\OM)$ be a biholomorphic map onto the
normalised realisation $\Dee(\OM)$ such that $\nu(z) = 0$. By construction,
\[
  \{w \in \Dee(\OM) : K_{\Dee(\OM)}(0, w) < K_{\Dee(\OM)}(0; \nu(S))\}\,\subseteq\,\nu(D).
\]
Now, since $\Dee(\OM)$ is, by Result~\ref{R:corresp}, the unit ball with respect to a complex norm 
$|\bcdot|_{\Dee(\OM)}$ on
$\Cn$,
\[
  K_{\Dee(\OM)}(0,w)\,=\,\tanh^{-1}\big(|w|_{\Dee(\OM)}\big) \quad \forall w\in \Dee(\OM).
\]
This calculation is given in \cite[Example~3.1.7]{jarnickiPflug:idmca93}. 
From the last two facts, we get
\begin{equation}\label{E:key_incl}
  \tanh\big(K_{\Dee(\OM)}(0; \nu(S))\big)\Dee(\OM)\,\subseteq\,\nu(D).
\end{equation}
Now, by biholomorphic invariance of the Kobayashi distance, $K_{\Dee(\OM)}(0; \nu(S)) = K_{\OM}(z; S)$. Then,
as $\nu \in \schl_{D}(z; \Dee(\OM))$ (in the notation of Section~\ref{S:complex}), \eqref{E:key_incl} implies:
\[
  \squee{\Dee(\OM)}_D(z)\,\geq\,\tanh\big(K_{\OM}(z;S)\big).
\]
By the above inequality and by Proposition~\ref{P:aux}, we have
\[
  s_D(z)\,\geq\,\frac{\tanh(K_{\OM}(z; S))}{\sqrt{\rank(\OM)}}.
\]
Since $z\in D$ was arbitrarily chosen, the last inequality is true for every $z\in D$. The
desired upper bound for $s_D(z)$ is given by Result~\ref{R:upper_est}. This establishes the
theorem.
\end{proof}

\section{The proof of Theorem~\ref{T:product}}

The proof of Theorem~\ref{T:product} will rely crucially on a deep result by Globevnik (also see an
earlier result \cite{forstGlob:dpd92} by Forstneri{\v{c}}--Globevnik). The statement relevant to our
needs is as follows:

\begin{result}[paraphrasing Globevnik, \cite{globevnik:dSm00}]\label{R:Glob}
Let $D$ be a pseudoconvex domain in $\Cn$, $n\geq 2$. Given any point $z\in D$ and $v\in \Cn$, there
exists a proper holomorphic map $f_z: \D \to D$ such that $f_z(0) = z$ and $f_z^\prime(0) = \lambda v$
for some $\lambda>0$.
\end{result}

\begin{proof}[The proof of Theorem~\ref{T:product}] 
We shall prove the theorem by establishing the contrapositive formulations of both parts~\ref{I:contr} and~\ref{I:hi_dim}.
\smallskip

\noindent{\emph{The proof of~\ref{I:contr}:} Let $D$ be a product of at least $m$ irreducible factors,
$m\geq2$. Write $D = D_1\times\dots\times D_p$, where each $D_j$, $j=1,\dots, p$, is irreducible. Clearly
$m\leq p$. Fix $z\in D$ and write $z = (z_1,\dots, z_p)$, where $z_j\in D_j$, $j=1,\dots, p$.
Since the above is a product comprising irreducible factors, we cannot rule out some of the factors
being planar domains. In the latter case, since $D$ is contractible, each such factor is biholomorphic to $\D$.}
\smallskip

If $D_j$ is planar, then let $\varphi_j$ be a biholomorphic mapping of $\D$ onto $D_j$ such that
$\varphi_j(0) = z_j$ (recall: $D_j$ is biholomorphic to $\D$). Since $D$ is pseudoconvex, each
non-planar factor is pseudoconvex. So, if $\dim_{\C}(D_j)\geq 2$, then, by Result~\ref{R:Glob}, there exists
a proper holomorphic map $\varphi_j: \D\to D_j$ such that $\varphi_j(0) = z_j$. Define the map
$\varphi : \D^p\to D$ as $\varphi := (\varphi_1,\dots, \varphi_p)$. By construction
\[
  \varphi(\bcdot\,\bas_j) = (z_1,\dots, z_{j-1}, \varphi_j, z_{j+1},\dots, z_p)
\]
for each $j=1,\dots, p$. As each $\varphi_j$ is proper, $\varphi$ satisfies the conditions stated
in Proposition~\ref{P:tech_upbnd}. Thus, by this proposition, and from the fact that $z\in D$ was
chosen arbitrarily, we get:
\[
  s_D(z)\,\leq\,1/\sqrt{p}\,\leq\,1/\sqrt{m} \quad \forall z\in D.
\]
Equivalently, since the squeezing function is preserved by biholomorphic maps, if $s_D(z) > 1/\sqrt{m}$
for some $z\in D$, then $D$ cannot be biholomorphic to a product of $m$ or more irreducible factors.%
\smallskip

\noindent{\emph{The proof of~\ref{I:hi_dim}:} Let $D$ be a product of $m$ factors,
$m\geq2$, of dimension\,$\geq 2$. Write $D = D_1\times\dots\times D_m$. Fix $z\in D$ and write
$z = (z_1,\dots, z_m)$, where $z_j\in D_j$, $j=1,\dots, m$. Since $D$ is pseudoconvex, each
$D_j$ is pseudoconvex. Since $\dim_{\C}(D_j)\geq 2$, by Result~\ref{R:Glob}, there exists, for each $j$,
a proper holomorphic map $\varphi_j: \D\to D_j$ such that $\varphi_j(0) = z_j$.
Define the map $\varphi : \D^m\to D$ as $\varphi := (\varphi_1,\dots, \varphi_m)$. For the same
reasons as in the previous paragraph, this map $\varphi$ satisfies the conditions stated
in Proposition~\ref{P:tech_upbnd} and, therefore,
\[
  s_D(z)\,\leq\,1/\sqrt{m} \quad \forall z\in D.
\]
Equivalently, if $s_D(z) > 1/\sqrt{m}$ for some $z\in D$, then $D$ cannot be biholomorphic to a product of 
$m$ factors of dimension\,$\geq 2$.}
\end{proof}

\section{The proof of Theorem~\ref{T:wlcd}}\label{S:wlcd}
In this section, we shall prove a more general theorem, of which Theorem~\ref{T:wlcd} is a special case. Its
proof requires the following result. In what follows, we shall follow the notation described in the last
paragraph of Section~\ref{S:complex}.

\begin{lemma}\label{L:simply_conn}
Let $\De\varsubsetneq \Cn$, $n\geq 2$, be a bounded contractible domain. Suppose there exists a point $o\in \De$
such that for
every affine complex hyperplane $\hypp$ containing $o$, $\hypp\cap \De$ is connected. Let $z_0\in \De$ and let
$A$ be any invertible $\C$-affine map such that $A(z_0) = 0$. Then, for each $j=1,\dots, n$, 
$\pi_j(A(\De))$ is simply connected.
\end{lemma}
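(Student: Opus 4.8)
The plan is to prove the equivalent statement obtained after the affine change of coordinates. First I would replace $\De$ by $\De' := A(\De)$ and $o$ by $o' := A(o)$: since $A$ is an invertible $\C$-affine map, it carries $\De$ to a bounded contractible domain and carries complex affine hyperplanes to complex affine hyperplanes, so $\De'$ again has the property that every complex affine hyperplane through $o'$ meets it in a connected set. The assertion to prove becomes: $U := \pi_j(\De')$ is simply connected. Since $\De'$ is bounded, $U$ is a bounded subdomain of $\C$, and such a domain is simply connected if and only if $\C\setminus U$ is connected, equivalently if and only if every piecewise-$C^1$ loop in $U$ has winding number $0$ about every point of $\C\setminus U$. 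So I fix $w^*\in \C\setminus U$ and aim to show that every loop in $U$ has winding number $0$ about $w^*$.

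The main device is a logarithm. Because $w^*\notin U=\pi_j(\De')$, the holomorphic function $z\mapsto z_j-w^*$ is zero-free on $\De'$; as $\De'$ is contractible, hence simply connected, it admits a single-valued holomorphic logarithm $h$, so that $\pi_j = w^* + e^{h}$ on $\De'$. Two consequences are key. First, for any \emph{loop} $\ell$ in $\De'$ one has $\oint_{\pi_j\circ\ell}\tfrac{dw}{w-w^*} = \oint_\ell dh = 0$, i.e. the projected loop $\pi_j\circ\ell$ winds $0$ times about $w^*$. Second, on the central fibre $\De'_{w_0}:=\{z_j=w_0\}\cap\De'$, where $w_0:=\pi_j(o')$, the identity $e^{h}\equiv w_0-w^*$ forces $h$ to be locally constant; since this fibre is connected (it is the slice through $o'$ by the hyperplane $\{z_j=w_0\}$), $h$ equals the constant $c_0:=h(o')$ throughout $\De'_{w_0}$.

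It now suffices to show that $(\pi_j|_{\De'})_{\ast}:\pi_1(\De',o')\to\pi_1(U,w_0)$ is surjective, for then $\pi_1(U)=0$ because $\pi_1(\De')=0$. Concretely, I would try to show that every loop $\gamma$ in $U$ based at $w_0$ is homotopic in $U$ to $\pi_j\circ\ell$ for some loop $\ell$ in $\De'$ based at $o'$; the first consequence above then gives winding number $0$ and finishes the proof. To produce $\ell$ I would lift $\gamma$ through the holomorphic submersion $\pi_j|_{\De'}$: the image $\gamma([0,1])$ is compact, so it is covered by finitely many discs over each of which the submersion has a local holomorphic section, and one lifts $\gamma$ piecewise. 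At the finitely many break-points one must join consecutive pieces inside $\De'$; since the endpoints to be joined lie in a common fibre $\{z_j=\gamma(t_i)\}\cap\De'$, I would close these gaps by paths running through $o'$ inside connected hyperplane slices, using the connectedness of the central fibre to pin the final endpoint back to $o'$. The constant value $c_0$ of $h$ on $\De'_{w_0}$ is what guarantees that the closed-up loop returns to the correct sheet.

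The step I expect to be the genuine obstacle is exactly this lifting and gap-closing. The fibres $\{z_j=w\}\cap\De'$ over values $w\ne w_0$ need not be connected, so a na\"ive piecewise lift can suffer sheet-jumps at the break-points, and a single continued lift may escape to $\bdy\De'$ in the fibre directions before $\gamma$ is traversed. Controlling this monodromy is where the full hypothesis---that \emph{every} complex affine hyperplane through $o'$, and not merely the central fibre, meets $\De'$ in a connected set---must be used; mere contractibility of $\De'$ is insufficient, since it makes $H^1(\De')=0$ and hence yields no information under pull-back. The heart of the argument is therefore to parlay connectedness of all the slices through $o'$ into the triviality of the monodromy of $\pi_j|_{\De'}$ along $\gamma$.
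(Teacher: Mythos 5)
Your reductions are all correct: passing to $A(\De)$, the winding-number criterion for simple connectivity of a bounded plane domain, the existence of the holomorphic logarithm $h$ on the contractible domain $\De'$, the vanishing of the winding number of $\pi_j\circ\ell$ about $w^*$ for any \emph{loop} $\ell$ in $\De'$, and the reduction to surjectivity of $(\pi_j|_{\De'})_*$ on fundamental groups. But the proposal stops exactly where the proof has to happen: you never produce, for a given loop $\gamma$ in $U$, a loop in $\De'$ whose projection is homotopic to $\gamma$, and the mechanism you sketch for doing so fails. The gaps in your piecewise lift occur at endpoints lying in fibres $\pi_j^{-1}(\gamma(t_i))=\{z_j=\gamma(t_i)\}$, and these hyperplanes do \emph{not} contain $o'$ unless $\gamma(t_i)=w_0$, so the hypothesis says nothing about their connectedness and you cannot close the gaps inside them. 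Your fallback---joining the two gap endpoints by a path through $o'$ inside slices through $o'$---destroys the structure you need: such a connector does not stay in the fibre, so its $\pi_j$-image is a non-constant loop in $U$ based at $\gamma(t_i)$; the assembled path then projects to $\gamma$ with these parasitic loops inserted, and since each connector is a non-closed path in $\De'$, your logarithm argument gives no control whatsoever on their winding numbers. So the monodromy problem you flag is genuinely unresolved: you name the obstacle but do not overcome it.

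The paper's proof shows that the obstacle is to be avoided rather than confronted. It lifts $\gamma$ in one stroke as a continuous, not necessarily closed, path $\Gamma$ in the domain with $\pi_j\circ\Gamma=\gamma$ (this unbroken lift is obtained by a compactness/covering argument and has no intermediate break-points at all). Since $\gamma$ is based at $w_0=\pi_j(o')$, \emph{both} endpoints of $\Gamma$ lie in the single fibre $\pi_j^{-1}(w_0)\cap\De'$, which is exactly a hyperplane slice through $o'$ and hence connected by hypothesis; a path $\sigma$ in this fibre closes $\Gamma$ into a loop $\wt{\Gamma}$, and because $\pi_j\circ\sigma\equiv w_0$, the projection of $\wt{\Gamma}$ is $\gamma$ followed by a constant path, i.e.\ homotopic to $\gamma$. (The paper then does not even need winding numbers: it pushes a contraction $H$ of the domain, applied to $\wt{\Gamma}$, down through $\pi_j$ and, with careful base-point bookkeeping, writes an explicit null-homotopy of $\gamma$ in $U$; your winding-number criterion would equally well finish from this point.) The lesson is that the connectedness hypothesis is used exactly once, at the base fibre, and no ``triviality of monodromy'' over the other fibres is needed or proved. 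To repair your argument you would have to replace the piecewise lift by a genuine unbroken path lift---itself requiring justification, which is where the paper invokes its covering conditions---and then close up only at the base point, as above.
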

\begin{proof}
Observe that $A(\De)$ is also contractible. Furthermore:
\begin{itemize}[leftmargin=22pt]
  \item For each $j=1,\dots, n$, $\pi_j(A(\De))$ is simply connected if and only if
  $\pi_{\left(A^{-1}({\rm span}_{\C}\{\bas_j\})-z_0\right)}(\De)$ is simply connected; and
  \item given any one-dimensional complex subspace $S\varsubsetneq \Cn$,
  $\pi_{S}(\De-o)$ and $\pi_S(\De)$ differ by a translate in $\C$
\end{itemize}
(where $\pi_{S}$ denotes the orthogonal projection of $\Cn$, with respect to the standard Hermitian inner
product, onto $S$). In view of these facts, we may assume without loss of generality that $0\in \De$ and $o = 0$,
and it suffices to prove the following:
\begin{itemize}[leftmargin=22pt]
  \item[$(*)$] For any one-dimensional complex subspace $S\varsubsetneq \Cn$, $\pi_{S}(\De)$ is simply connected.
\end{itemize}
\smallskip

To this end, fix $S$. Let us write
\[
  V_{S}\,:=\,S^{\perp} \quad\text{and}
  \quad \De_{S}\,:=\,\pi_{S}(\De).
\]
By assumption, $0\in \De_{S}$. Let $\gamma : [0, 1]\to \De_{S}$ be a closed path with $\gamma(0) = \gamma(1) = 0$. Now,
$\gamma([0,1])$ can be covered by a family $\{U_t: t\in [0, 1]\}$ of open sets that are so small that for each
$t$: $(i)$~$\gamma(t)\in U_t\varsubsetneq \De_{S}$, $(ii)$~$\gamma([0,1])\cap U_t$ is connected, and such that $(iii)$~for each
$\zt\in \gamma([0,1])\cap U_t$ and each $z\in \pi_{S}^{-1}\{\zt\}\cap \De$, there exists a $\smoo^0$-curve
$\Gamma_{t,\,\zt,\,z} \varsubsetneq \De$ satisfying
\[  
  z\in \Gamma_{t,\,\zt,\,z} \quad\text{and}
  \quad \pi_{S}(\Gamma_{t,\,\zt,\,z})\,=\,\gamma([0,1])\cap U_t.
\]
Then, by a standard argument using the compactness of $\gamma([0, 1])$, we can find a path
$\Gamma : [0, 1]\to \De$ such that $\pi_{S}\circ \Gamma = \gamma$. Let $p_j = \Gamma(j)$, $j=0, 1$; by definition
$p_0, p_1\in V_{S}\cap \De$. By hypothesis, $V_{S}\cap \De$ is connected. Thus, there exists a path
$\sigma : [0, 1]\to V_{S}\cap \De$ such that $\sigma(0) = p_1$ and $\sigma(1) = p_0$. Define
\[
  \wt{\Gamma}(t)\,:=\,\begin{cases}
  						\Gamma(2t), &\text{if $t\in [0, 1/2]$}, \\
						\sigma(2t-1), &\text{if $t\in [1/2, 1]$.}
					\end{cases}
\]
This is a closed path in $\De$ with $\wt{\Gamma}(0) = \wt{\Gamma}(1) = p_0$.
\smallskip

By hypothesis, there exists a continuous map $H: \De\times [0, 1]\to \De$ such that
\[
  H(\bcdot\,, 0)\,=\,{\sf id}_{\De} \quad\text{and}
  \quad H(\bcdot\,, 1)\,\equiv\,0.
\]
Define $\tau := \pi_{S}\circ H(p_0, \bcdot) : [0, 1]\to \De_{S}$. Finally, write
\[
  \wht{s}\,:=\left(1-2\,\frac{2s-1}{3}\right)^{-1}\,=\,\left(\frac{5-4s}{3}\right)^{-1} 
  \quad\forall s\in [1/2, 1].
\]
With these objects, we can define a map $F: [0, 1]\times [0, 1]\to \De_{S}$ as follows:
\[
F(t,s)\,:=\,\begin{cases}
			\text{{\small{$\gamma\left((1-s)^{-1}t\right)$,}}} &\text{{\small{if $t\in [0, 1-s), \ s\in [0, 1/2]$,}}} \\
			\text{{\small{$0$,}}} &\text{{\small{if $t\in [1-s, 1], \ s\in [0, 1/2]$,}}} \\
			\text{{\small{$\tau(3t)$,}}} &\text{{\small{if $t\in [0, (2s-1)/3], \ s\in (1/2, 1]$,}}} \\
			\text{{\small{$\pi_{S}\circ H\left(\wt{\Gamma}\left(\wht{s}\left(t-\frac{2s-1}{3}\right)\right), 2s-1\right)$,}}}
			&\text{{\small{if $t\in \left((2s-1)/3, 1-\frac{2s-1}{3}\right], \ s\in (1/2, 1]$,}}} \\
			\text{{\small{$\tau(3(1-t))$,}}} &\text{{\small{if $t\in \left(1-\frac{2s-1}{3}, 1\right], \ s\in (1/2, 1]$.}}}
			\end{cases}
\]
Showing that $F$ is continuous is elementary. The drawing below presents the following visual aid: points $(t_0,s_0)$
belonging to the closure of any segment in $\big([0, 1]^2\setminus \bdy[0, 1]^2\big)$ marked in the drawing are precisely  
the points where one must check that $\lim_{(t,s)\to (t_0,s_0)}F(t,s) = F(t_0, s_0)$.
\smallskip

\begin{center}
\begin{tikzpicture}[scale=0.8]
  \draw[->] (-3,-3) node[anchor=north east]{$0$} -- (3.5,-3) node[right]{$t$};
  \draw[->] (-3,-3) -- (-3, 3.5) node[above] {$s$};
  \draw (3,-3) node[below]{$1$} -- (3,3);
  \draw (-3,3) node[left]{$1$} -- (3,3);
  \draw (-3,0) -- (3,0);
  \draw (0,0) -- (3,-3);
  \draw (-3,0) -- (-1,3) node[above]{\text{\scriptsize{$(1/3,1)$}}};
  \draw (1,3) node[above]{\text{\scriptsize{$(2/3,1)$}}} -- (3,0);  
\end{tikzpicture}
\end{center}
  
By construction, we have:
\begin{align*}
  F(0, s)\,=\,F(1, s)\,=\,0 \quad &\forall s\in [0, 1/2], \\
  F(0, s)\,=\,\pi_{S}\circ H(p_0, 0)\,=\,0 \quad &\forall s\in (1/2,1],	\\
  F(1, s)\,=\,\pi_{S}\circ H(p_0, 1)\,=\,0 \quad &\forall s\in (1/2,1].
\end{align*}
Thus, $F$ is a homotopy in $\De_{S}$ between the closed paths $F(\bcdot\,,0)$ and $F(\bcdot\,,1)$
via closed paths whose initial and terminal points do not vary with $s\in [0,1]$.
Next, we observe that
\[
 F(\bcdot\,, 0)\,=\,\gamma \quad\text{and}
 \quad F(t, 1)\,=\,\begin{cases}
 						\tau(3t), &\text{if $t\in [0, 1/3]$,}\\
						0, &\text{if $t\in (1/3, 2/3]$,}\\
						\tau(3(1-t)), &\text{if $t\in (2/3, 1]$.}
						\end{cases}
\]
Thus, $F$ is a homotopy with the above-mentioned properties between $\gamma$ and the null-homotopic
closed path described above.
This establishes that $\pi_1(\De_{S}, 0)$ is trivial: i.e., $\De_{S}$
is simply connected. Finally, as $S$ was arbitrarily chosen, $(*)$ above, and hence the result, is proved.
\end{proof}

We are now in a position to prove the theorem alluded to in Section~\ref{S:intro} and at the beginning
of this section. The proof of this theorem is inspired to a great extent by the proof of
\cite[Theorem~1]{nikolovAndreev:bbsfCdpd17}. 

\begin{theorem}\label{T:gen_wlcd}
Let $D\varsubsetneq \Cn$, $n\geq 2$, be a bounded contractible weakly linearly convex domain. Assume that
there exists a point $p\in D$ such that for every affine complex hyperplane $\hypp$ containing $p$, $\hypp\cap D$
is connected. Then, $D$ is holomorphic homogeneous regular.
\end{theorem}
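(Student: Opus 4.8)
The plan is to produce, for each $z_0\in D$, an injective holomorphic map $F:D\to\B^n$ with $F(z_0)=0$ whose image contains a Euclidean ball $B^n(0,c_n)$ of radius $c_n>0$ depending only on $n$; this gives $s_D(z_0)\geq c_n$ for every $z_0$, hence $\inf_{z\in D}s_D(z)\geq c_n>0$. First I would fix $z_0$ and pass to the special coordinates furnished by Result~\ref{R:special_coords}: since $D$ is bounded and weakly linearly convex, there is an invertible $\C$-affine map $\Phi_{z_0}$ (a translation taking $z_0$ to $0$, followed by a unitary, an anisotropic diagonal scaling, and a lower-triangular unipotent shear) such that $\wt{D}:=\Phi_{z_0}(D)$ satisfies $0\in\wt{D}$ and, for each $j$, the complex hyperplane $\{Z_j=1\}$ is a supporting hyperplane of $\wt{D}$. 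Weak linear convexity then yields $1\notin\pi_j(\wt{D})$ while $1\in\bdy\pi_j(\wt{D})$; in particular $\beta_j:={\rm dist}(0,\bdy\pi_j(\wt{D}))\leq 1$. Crucially, because $D$ is contractible and $\hypp\cap D$ is connected for every affine complex hyperplane $\hypp$ through $p$, Lemma~\ref{L:simply_conn} (applied to the invertible affine map $\Phi_{z_0}$) tells us that each projection $\pi_j(\wt{D})\subseteq\C$ is simply connected.

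Next I would build $F$ from the Riemann maps of the projections. For each $j$, let $\psi_j:\pi_j(\wt{D})\to\D$ be the Riemann map with $\psi_j(0)=0$ and $\psi_j'(0)>0$ (available because $\pi_j(\wt{D})$ is a simply connected subdomain of $\C$ that contains $0$ and omits $1$). Setting $\Psi:=(\psi_1\circ\pi_1,\dots,\psi_n\circ\pi_n)$, the inclusion $\wt{D}\subseteq\pi_1(\wt{D})\times\dots\times\pi_n(\wt{D})$ shows that $\Psi:\wt{D}\to\D^n$ is holomorphic with $\Psi(0)=0$, and $\Psi$ is injective because each $\psi_j$ is. Since $\D^n\subseteq B^n(0,\sqrt{n})$, the map $F:=(1/\sqrt{n})\,\Psi\circ\Phi_{z_0}$ is an injective holomorphic map $D\to\B^n$ with $F(z_0)=0$. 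To estimate $F(D)$ from inside I would invoke the Koebe theorems: applying the Koebe $1/4$-theorem to $\psi_j^{-1}$ gives $\beta_j\geq 1/(4\psi_j'(0))$, whence $\psi_j'(0)\geq 1/(4\beta_j)\geq 1/4$. If one can exhibit a polydisc $P:=\{Z:|Z_j|<c_n,\ j=1,\dots,n\}\subseteq\wt{D}$ with $c_n$ depending only on $n$, then $\{|Z_j|<c_n\}\subseteq\pi_j(\wt{D})$, and applying the Koebe $1/4$-theorem to $\psi_j$ on this disc gives $\psi_j(\{|Z_j|<c_n\})\supseteq B(0,c_n\psi_j'(0)/4)\supseteq B(0,c_n/16)$. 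Hence $\Psi(P)\supseteq\prod_j B(0,c_n/16)\supseteq B^n(0,c_n/16)$, so $F(D)\supseteq B^n\big(0,c_n/(16\sqrt{n})\big)$ and $s_D(z_0)\geq c_n/(16\sqrt{n})$.

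The main obstacle is exactly the uniform polydisc containment $P\subseteq\wt{D}$; this is the quantitative heart of the argument, and it is the point at which I would adapt the computations in the proofs of \cite[Theorem~13]{nikolovPflugZwonek:eimCcd11} and \cite[Theorem~1]{nikolovAndreev:bbsfCdpd17}. The inscribed round ball $B^n\big(z_0,{\rm dist}(z_0,\bdy D)\big)\subseteq D$ maps under $\Phi_{z_0}$ to an ellipsoid in $\wt{D}$ whose eccentricity is governed by the ordered quantities $\|a^1-z_0\|\leq\dots\leq\|a^n-z_0\|$, and a naive round ball inside $\wt{D}$ degenerates as $z_0\to\bdy D$; one must instead control the shape of $\wt{D}$ near $0$ directly. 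The orthogonality relations built into the special coordinates, together with the boundedness of the entries of the unipotent shear and the supporting-hyperplane normalization $\{Z_j=1\}$, are what force $\wt{D}$ to contain a polydisc whose polyradius is bounded below in terms of $n$ alone, with weak linear convexity entering through the existence of a supporting hyperplane at each $a^j$. Once this containment is secured with $c_n$ independent of $z_0$, the estimate of the previous paragraph holds uniformly, giving $\inf_{z\in D}s_D(z)\geq c_n/(16\sqrt{n})>0$ and proving that $D$ is holomorphic homogeneous regular.
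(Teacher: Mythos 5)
Your overall strategy is the same as the paper's: pass to the Nikolov--Pflug--Zwonek special coordinates of Result~\ref{R:special_coords}, invoke Lemma~\ref{L:simply_conn} to get simple connectivity of the coordinate projections, map into $\D^n$ via the product of the Riemann maps of those projections, scale by $1/\sqrt{n}$ to land in $\B^n$, and apply the Koebe $1/4$-theorem twice. Up to notation, your $\wt{D}$, $\psi_j$, $\Psi$ are the paper's $A(\De)$, $\varphi_j$, $\Phi$, and your estimates ($\psi_j'(0)\geq 1/4$ from $1\in\bdy\pi_j(\wt{D})$; a polydisc of radius $c_n$ inside $\wt{D}$ yields $s_D(z_0)\geq c_n/(16\sqrt{n})$) coincide with the paper's.

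However, the step you yourself flag as ``the main obstacle''---exhibiting $c_n\D^n\subseteq\wt{D}$ with $c_n$ depending only on $n$---is a genuine gap, not merely a deferred computation: it is the only place where weak linear convexity enters quantitatively, and your proposal offers a heuristic (orthogonality relations and supporting hyperplanes ``force'' the containment) rather than an argument; nothing you write rules out $c_n$ degenerating as $z_0\to\bdy D$. The paper closes this step as follows. First, by the very construction of the points $a^j$, the domain $\De:={\sf U}^{(z)}(D)$ contains, centred at ${\sf U}^{(z)}(z)$, the coordinate discs of radii $\|a^j-z\|$, so after the diagonal rescaling the domain $D(\De)$ (where $D(w):=D^{(z)}\big(w-{\sf U}^{(z)}(z)\big)$) contains all $n$ unit coordinate discs $\{\zt\bas_j:\zt\in\D\}$. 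Since $D(\De)$ is weakly linearly convex, \cite[Lemma~15]{nikolovPflugZwonek:eimCcd11} then gives the key containment $\acorn=\{|Z_1|+\dots+|Z_n|<1\}\subseteq D(\De)$; this lemma is precisely the device that converts weak linear convexity plus inscribed coordinate discs into an $\ell^1$-ball containment, and it (or an equivalent substitute) is absent from your sketch. Second, because $[A^{(z)}]_{\rm std.}$ is lower triangular with unit diagonal, the argument on \cite[p.~2]{nikolovAndreev:bbsfCdpd17}, applied using $\acorn\subseteq D(\De)$, produces a constant $c=c(n)$ with $c\D^n\subseteq A^{(z)}(D(\De))=\wt{D}$. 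With that containment supplied, the rest of your write-up goes through verbatim and reproduces the paper's bound $s_D(z)\geq c/(16\sqrt{n})$ for all $z\in D$.
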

\begin{proof}
Fix a point $z\in D$. Let $a^j:=a^j(z)\in \bdy{D}$,
${\sf U}^{(z)} \in U(n)$, and $A^{(z)}\in GL(n, \C)$ be as described by
Result~\ref{R:special_coords} (which requires a choice of hyperplanes $W_0,\dots, W_{n-1}$, as
explained by Result~\ref{R:special_coords}, to be specified), and let $D^{(z)}$ denote the
diagonal operater mentioned therein. Define
\begin{align*}
  D(w)\,&:=D^{(z)}\big(w-{\sf U}^{(z)}(z)), \\
  A(w)\,&:=\,A^{(z)}\circ D^{(z)}\big(w-{\sf U}^{(z)}(z)) \quad \forall w\in \Cn, \\
  \De\,&:=\,{\sf U}^{(z)}(D).
\end{align*}
By construction
\[
  \{\zt\bas_j : \zt\in \|a^j-z\|\D\}+{\sf U}^{(z)}(z)\,\subseteq\,\De, \quad j=1,\dots, n.
\] 
From this it follows that
\begin{equation}\label{E:disc_in}
  \{\zt\bas_j : \zt\in \D\}\,\subseteq\,D(\De), \quad j=1,\dots, n.
\end{equation}
Write
%\begin{equation}\label{E:acorn}
\[
  \acorn\,:=\,\{(Z_1,\dots, Z_n)\in \Cn: |Z_1|+\dots+|Z_n|<1\}.
\]
By hypothesis, $D(\De)$ is weakly linearly convex. Then, owing to \eqref{E:disc_in} and
\cite[Lemma~15]{nikolovPflugZwonek:eimCcd11}, it follows that $\acorn\subseteq D(\De)$.
\smallskip

It is clear that $\De$ satisfies all the conditions of Lemma~\ref{L:simply_conn} with
$(o, z_0) = ({\sf U}^{(z)}(p), {\sf U}^{(z)}(z))$. Thus, $\pi_j(A(\De))$ is simply connected
for each $j=1,\dots, n$. Fix a $j: 1\leq j\leq n$.
Write $\Delta_j := \pi_j(A(\De))$ and choose a Riemann map $\varphi_j : (\Delta_j, 0)\to (\D, 0)$. By
Result~\ref{R:special_coords}, $1\in \bdy\Delta_j$, due to which ${\rm dist}(0, \C\setminus \Delta_j)\leq 1$.
Therefore, it follows from the Koebe $1/4$ Theorem that
\begin{equation}\label{E:Koebe}
  |\varphi_j^\prime(0)| \geq 1/4.
\end{equation}
Since $\acorn\subseteq D(\De)$ and $[A^{(z)}]_{{\rm std.}}$ is lower triangular with all its diagonal entries
equal $1$, it follows from the argument in \cite[p.~\!2]{nikolovAndreev:bbsfCdpd17} that there exists a
constant $c$ that depends \textbf{only} on $n$ such
that $c\D^n\subseteq A(\De)$. Hence, for each
$j=1,\dots, n$, $c\D\subseteq \Delta_j$. The Koebe $1/4$ Theorem applied
to the map $\varphi_j(c\,\bcdot) : \D\to \D$ implies
\[
  D\Big(0, \frac{c}{4}|\varphi_j^\prime(0)|\Big)\,\subseteq\,\varphi_j(c\D).
\]
By \eqref{E:Koebe}, $(c/4)|\varphi_j^\prime(0)| \geq c/16$. 
Hence, $\varphi_j(c\D)\supseteq (c/16)\D$, and this holds for each
$j=1,\dots, n$. Now, consider the biholomorphic map $\Phi:= (\varphi_1,\dots,\varphi_n)$. Then,
\[
 \frac{c}{16}\,\D^n\,\subseteq\,\Phi(c\D^n)\,\subseteq\,\Phi(A(\De))\,\subseteq\,\D^n.
\]
Since the map $(1/\sqrt{n})\Phi|_{A(\De)}$ maps $A(\De)$ biholomorphically into $\B^n$, the above chain of
inclusions implies that $s_{A(\De)}(0) \geq c/(16\sqrt{n})$. Since $A\circ {\sf U}^{(z)}$ maps $D$ biholomorphically
onto $A(\De)$ and $A\circ {\sf U}^{(z)}(z)=0$, we get $s_{D}(z)\geq c/(16\sqrt{n})$. Since $z$ was arbitrarily
chosen, it follows that $D$ is holomorphic homogeneous regular.  
\end{proof}

In view of this theorem, we can now provide

\begin{proof}[The proof of Theorem~\ref{T:wlcd}]
By definition, $D$ is contractible and there exists a point $p\in D$ such that for any complex hyperplane $\hypp$
containing $p$, $\hypp\cap D$ is connected. Hence, $D$ satisfies the condition stated in Theorem~\ref{T:gen_wlcd}.
Thus, $D$ is holomorphic homogeneous regular. 
\end{proof}

\section*{Acknowledgements}
We are grateful to Ngaiming Mok for drawing our attention to the book \cite{mok:mrtHlsm89}, which proved to be very
relevant to the work in the first half of this paper, and to Harald Upmeier for suggesting an approach to proving
a conjecture (now Proposition~\ref{P:distances}). We are also grateful to the anonymous referee of this work for their
helpful suggestions on our exposition. 
G.~\!Bharali is supported by a DST-FIST grant (grant no.~TPN-700661).
D.~\!Borah and S.~\!Gorai are supported in part by an SERB grant (Grant No.~CRG/2021/005884).

\end{document}